\newtheorem{thm}{Theorem}[section]
\newtheorem{lem}[thm]{Lemma}
\theoremstyle{definition}
\newcommand{\scr}[1]{\mathscr #1}
\definecolor{wco}{rgb}{0.5,0.2,0.3}
\renewcommand{\bar}{\overline}
\numberwithin{equation}{section}
\newtheorem{rem}{Remark}[section]
\newcommand{\ua}{\uparrow}
\renewcommand{\hat}{\widehat}
\renewcommand{\tilde}{\widetilde}
\title{{\bf Convergence rates of theta-method for neutral SDDEs under non-globally Lipschitz continuous coefficients\thanks{Supported by NSFC(No., 11561027, 11661039), NSF of Jiangxi(No., 20161BAB211018), Scientific Research Fund of Jiangxi Provincial Education Department(No., GJJ150444).}}
}
\author{
{\bf  Li Tan$^{a,b}$\, and \, Chenggui Yuan$^c$}\\
 \footnotesize{$^{a}$ School of Statistics, Jiangxi University of Finance and Economics, Nanchang, Jiangxi, 330013, P. R. China}\\
 \footnotesize{$^{b}$ Research Center of Applied Statistics, Jiangxi University of Finance and Economics,}\\
  \footnotesize{ Nanchang, Jiangxi, 330013, P. R. China}\\
\footnotesize{$^c$ Department of Mathematics, Swansea University, Swansea, SA2 8PP, U. K. }\\
C.Yuan@swansea.ac.uk}
\begin{document}
\def\R{\mathbb R}  \def\ff{\frac} \def\ss{\sqrt} \def\B{\mathbf
B}
\def\N{\mathbb N} \def\kk{\kappa} \def\m{{\bf m}}
\def\dd{\delta} \def\DD{\Delta} \def\vv{\varepsilon} \def\rr{\rho}
\def\<{\langle} \def\>{\rangle} \def\GG{\Gamma} \def\gg{\gamma}
  \def\nn{\nabla} \def\pp{\partial} \def\EE{\scr E}
\def\d{\text{\rm{d}}} \def\bb{\beta} \def\aa{\alpha} \def\D{\scr D}
  \def\si{\sigma} \def\ess{\text{\rm{ess}}}
\def\beg{\begin} \def\beq{\begin{equation}}  \def\F{\scr F}
\def\Ric{\text{\rm{Ric}}} \def\Hess{\text{\rm{Hess}}}
\def\e{\text{\rm{e}}} \def\ua{\underline a} \def\OO{\Omega}  \def\oo{\omega}
 \def\tt{\tilde} \def\Ric{\text{\rm{Ric}}}
\def\cut{\text{\rm{cut}}} \def\P{\mathbb P} \def\ifn{I_n(f^{\bigotimes n})}
\def\C{\scr C}      \def\aaa{\mathbf{r}}     \def\r{r}
\def\gap{\text{\rm{gap}}} \def\prr{\pi_{{\bf m},\varrho}}  \def\r{\mathbf r}
\def\Z{\mathbb Z} \def\vrr{\varrho} \def\ll{\lambda}
\def\L{\scr L}\def\Tt{\tt} \def\TT{\tt}\def\II{\mathbb I}
\def\i{{\rm in}}\def\Sect{{\rm Sect}}\def\E{\mathbb E} \def\H{\mathbb H}
\def\M{\scr M}\def\Q{\mathbb Q} \def\texto{\text{o}} \def\LL{\Lambda}
\def\Rank{{\rm Rank}} \def\B{\scr B} \def\i{{\rm i}} \def\HR{\hat{\R}^d}
\def\to{\rightarrow}\def\l{\ell}
\def\8{\infty}\def\Y{\mathbb{Y}}

\maketitle

\begin{abstract}
This paper is concerned with strong convergence and almost sure convergence for neutral stochastic differential delay equations under non-globally Lipschitz continuous coefficients. Convergence rates of $\theta$-EM schemes are given for these equations driven by Brownian motion and pure jumps respectively, where the drift terms satisfy locally one-sided Lipschitz conditions, and diffusion coefficients obey locally Lipschitz conditions, and the corresponding coefficients are highly nonlinear with respect to the delay terms.

\end{abstract}
\noindent
{\bf AMS Subject Classification}:   65C30, 65L20 \\
\noindent
 {\bf Keywords}: stochastic differential delay equations; $\theta$-EM scheme; strong convergence; almost sure convergence; highly nonlinear

 \vskip 2cm

\section{Introduction}
With the development of computer technology, numerical analyses have been witnessed rapid growth since most equations can not be solved explicitly. There is an extensive literature concerned with numerical solutions for stochastic differential equations (SDEs) and stochastic differential delay equations (SDDEs). In 1955, Maruyama \cite{m55} put forward Euler-Maruyama (EM) scheme for SDEs. After that, there is a strong interest in numerical methods to all kinds of differential equations. Gikhman and Skorokhod \cite{gs72} showed that under global Lipschitz and linear growth condition, EM scheme converges to exact solution with order 1/2 for SDEs, while for additive noise case, the convergence rate is 1. Kloeden and Platen \cite{kp92} also studied numerical methods under a global Lipschitz condition. However, the global condition sometimes is too strict. In order to cover a larger part of SDEs, Higham et al. \cite{hms02} studied strong convergence of Euler-type methods for nonlinear SDEs. They gave convergence rate of EM scheme and backward Euler scheme for SDEs under local Lipschitz and one-sided Lipschitz condition. Later, Mao and Sabanis \cite{ms03} showed that the EM scheme will converge to exact solutions for SDDEs under a local Lipschitz condition. Higham and Kloeden \cite{hk05} presented and analysed two implicit methods for It\^{o} SDEs with Poisson-driven jumps where coefficients satisfy local Lipschitz conditions. Bao and Yuan \cite{by13} investigated convergence rate of EM scheme for SDDEs, where the corresponding coefficients may be highly nonlinear with respect to the delay variables. There are also some other literature concerning with strong convergence of explicit and implicit Euler-type methods to SDEs or SDDEs under non-global Lipschitz conditions, see \cite{hjk12, hjk13, ms13, zwh15} and the reference therein.

Neutral SDDEs plays an important role in stochastic analysis. As to its numerical analysis, Wu and Mao \cite{wm08} examined numerical solutions of neutral stochastic functional differential equations and established the strong mean square convergence theory of EM scheme under local Lipschitz condition; Zhou \cite{z15} established a criterion on exponential stability of EM scheme and backward scheme to neutral SDEs; Zong and Huang \cite{zw16} concerned with $p$-th moment and almost sure exponential stability of the exact and EM-scheme solutions of neutral SDDEs; Ji et al. \cite{jby16} generalized the results of \cite{by13} to neutral SDDEs; Tan and Yuan \cite{ty16} proposed a tamed $\theta$-EM scheme and gave convergence rate for neutral SDDEs driven by Brownian motion and pure jumps under one-sided Lipschitz condition. Motivated by Bao and Yuan \cite{by13} and Zong et al. \cite{zwh15}, the drift and diffusion coefficients may be highly nonlinear with respect to delay variables. Will the $\theta$-EM scheme converges to exact solutions strongly and almost surely for neutral SDDEs if the drift terms satisfy locally one-sided Lipschitz condition and diffusion coefficients obey locally Lipschitz conditions, and the corresponding coefficients is highly nonlinear? In this paper, we shall give a positive answer when the corresponding coefficients are highly nonlinear with respect to the delay terms.

The rest of paper is organized as follows: in Section 2, strong convergence rate and almost sure convergence rate are given for neutral SDDEs driven by Brownian motion under non-globally Lipschitz condition, while in Section 3, the Brownian motion is replaced by pure jumps,  the convergence rates are also provided under similar conditions.

\section{Convergence Rates for Brownian Motion Case}
\subsection{Preliminaries}
Let $(\Omega,\mathscr{F},\{\mathscr{F}_t\}_{t\ge 0}, \mathbb{P})$ be a complete probability space with $\{\mathscr{F}_t\}_{t\ge 0}$ satisfying the usual conditions (i.e., it is right continuous and increasing while $\mathscr{F}_0$ contains all $\mathbb{P}$-null sets). $(\mathbb{R}^n, \langle\cdot,\cdot\rangle,|\cdot|)$ is an $n$-dimensional Euclidean space. Denote $\mathbb{R}^{n\times d}$ by the set of all $n\times d$ matrices $A$ with trace norm $\|A\|=\sqrt{\mbox{trace}(A^TA)}$, where $A^T$ is the transpose of matrix $A$. For a given $\tau\in(0,\infty)$, define the uniform norm $\|\zeta\|_\infty:=\sup_{-\tau\le\theta\le 0}|\zeta(\theta)|$ for $\zeta\in\mathcal{C}([-\tau,0];\mathbb{R}^n)$ which denotes all continuous functions from $[-\tau,0]$ to $\mathbb{R}^n$. $W(t)$ is a $d$-dimensional Brownian motion defined on $(\Omega,\mathscr{F},\{\mathscr{F}_t\}_{t\ge 0}, \mathbb{P})$. In this section, we consider the following neutral SDDE on $\mathbb{R}^n$:
\begin{equation}\label{brownian}
\begin{split}
\mbox{d}[X(t)-D(X(t-\tau))]=&b(X(t), X(t-\tau))\mbox{d}t+\sigma(X(t), X(t-\tau))\mbox{d}W(t), t\ge 0
\end{split}
\end{equation}
with initial data $X(t)=\xi(t)\in\mathcal{L}^p_{\mathscr{F}_0}([-\tau,0];\mathbb{R}^n)$ for $t\in[-\tau,0]$, that is, $\xi$ is an $\mathscr{F}_0$-measurable $\mathcal{C}([-\tau,0];\mathbb{R}^n)$-valued random variable with $\mathbb{E}\|\xi\|^p_\infty<\infty$ for $p\ge 2$. Here, $D:\mathbb{R}^n\rightarrow\mathbb{R}^n$, and $b:\mathbb{R}^n\times\mathbb{R}^n\rightarrow\mathbb{R}^n$, $\sigma:\mathbb{R}^n\times\mathbb{R}^n\rightarrow\mathbb{R}^{n\times d}$ are continuous functions. In order to guarantee the existence and uniqueness of solutions to \eqref{brownian}, we firstly introduce functions $V_i, i=1,2,3$ such that for any $x,y\in\mathbb{R}^n$,
\begin{equation}\label{vixy}
0\le V_i(x,y)\le L_i(1+|x|^{l_i}+|y|^{l_i}), i=1,2,3
\end{equation}
for some $L_i>0, l_i\ge 1$. Furthermore, in the sequel, for any $x,y,\bar{x},\bar{y}\in\mathbb{R}^n$, we shall assume that
\begin{enumerate}
\item[{\bf (A1)}]There exists a positive constant $K_1$ such that
\begin{equation*}
\langle x-D(y)-\bar{x}+D(\bar{y}), b(x, y)-b(\bar{x}, \bar{y})\rangle\le K_1^2|x-\bar{x}|^2+|V_1(y,\bar{y})|^2|y-\bar{y}|^2,
\end{equation*}
and
\begin{equation*}
|b(x, y)-b(x, \bar{y})|\le V_1(y,\bar{y})|y-\bar{y}|.
\end{equation*}
\item[{\bf (A2)}] There exists a positive constant $K_2$ such that
\begin{equation*}
\|\sigma(x, y)-\sigma(\bar{x},\bar{y})\|\le K_2|x-\bar{x}|+V_2(y,\bar{y})|y-\bar{y}|.
\end{equation*}
\item[{\bf (A3)}] $D(0)=0$ and $|D(y)-D(\bar{y})|\le V_3(y,\bar{y})|y-\bar{y}|$.
\end{enumerate}

\begin{rem}
There are some examples such that (A1)-(A3) hold. For example, set
\begin{equation*}
\begin{split}
D(y)=-y^3,\quad b(x,y)=x-x^3+y^3,\quad \sigma(x,y)=x+y^4
\end{split}
\end{equation*}
for any $x,y\in\mathbb{R}$. It is to easy to check that (A1)-(A3) is satisfied with $V_i(x,y)=1+|x|^2+|y|^2, i=1,3$ and $V_2(x,y)=1+|x|^3+|y|^3$ for arbitrary $x,y\in\mathbb{R}$.
\end{rem}

\begin{rem}\label{remark1}
With assumption (A3), we immediately arrive at
\begin{equation}\label{Dybound}
|D(y)|\le V_3(y,0)|y|\le L_3(1+|y|+|y|^{l_3+1}).
\end{equation}
With assumptions (A1)-(A3), we have
\begin{equation*}
\begin{split}
\langle x-D(y), b(x, y)\rangle=&\langle x-D(y)-0+D(0),b(x,y)-b(0,0)\rangle+\langle x-D(y), b(0, 0)\rangle\\
\le& K_1^2|x|^2+|V_1(y,0)|^2|y|^2+\frac{1}{2}|x-D(y)|^2+\frac{1}{2}|b(0,0)|^2\\
\le&(K_1^2+1)|x|^2+|V_1(y,0)|^2|y|^2+|V_3(y,0)|^2|y|^2+\frac{1}{2}|b(0,0)|^2,
\end{split}
\end{equation*}
and
\begin{equation*}
\begin{split}
\|\sigma(x, y)\|^2\le 2\|\sigma(x, y)-\sigma(0,0)\|^2+2\|\sigma(0,0)\|^2\le 4K_2^2|x|^2+4|V_2(y,0)|^2|y|^2+2\|\sigma(0,0)\|^2.
\end{split}
\end{equation*}
Denote $K=\max\{2(K_1^2+1), 4K_2^2, |b(0,0)|^2, 2\|\sigma(0,0)\|^2\}$, and $|V(y,0)|^2=2\max\{|V_1(y,0)|^2+|V_3(y,0)|^2,2|V_2(y,0)|^2\}$, we can rewrite the above inequalities as
\begin{equation}\label{monotone}
2\langle x-D(y), b(x, y)\rangle\vee\|\sigma(x,y)\|^2\le K(1+|x|^2)+|V(y,0)|^2|y|^2.
\end{equation}
\end{rem}

Throughout the paper, we shall assume that $C$ is a positive constant, which may change line by line.

\begin{lem}\label{exactxtp}
{\rm Let (A1)-(A3) hold. Then there exists a unique global solution to \eqref{brownian}, moreover, the solution has the properties that for any $p\ge 2$, $T>0$,
\begin{equation}\label{xtp}
\mathbb{E}\left(\sup\limits_{0\le t\le T}|X(t)|^p\right)\le C,
\end{equation}
where $C=C(\xi, p, T)$ is a positive constant depending on the initial data $\xi$, $p$ and  $T$.
}
\end{lem}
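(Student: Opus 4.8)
The plan is to establish existence, uniqueness, and the moment bound by a standard truncation/localization argument adapted to the neutral term. Since $b$ and $\sigma$ are only locally Lipschitz (the functions $V_i$ grow polynomially), I would first truncate: for each integer $R\ge 1$ define $b_R(x,y)=b(\pi_R(x),\pi_R(y))$ and $\sigma_R(x,y)=\sigma(\pi_R(x),\pi_R(y))$, where $\pi_R(x)=x\wedge(R/|x|)$ is the projection onto the ball of radius $R$, and similarly keep $D$ as is (note $D$ is globally of the form in \eqref{Dybound}, but since we also need $D$ controlled one can truncate it too or handle it directly via (A3)). The truncated coefficients are globally Lipschitz, so by the known theory for neutral SDDEs (e.g. the contraction-mapping construction in Wu--Mao \cite{wm08}) there is a unique global solution $X_R$ to the truncated equation with initial data $\xi$. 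Defining the stopping time $\tau_R=\inf\{t\ge 0: |X_R(t)|\ge R\}$, the usual consistency argument shows $X_R=X_{R'}$ on $[0,\tau_R\wedge\tau_{R'}]$, so we get a local solution $X$ on $[0,\tau_\infty)$ with $\tau_\infty=\lim_R\tau_R$; proving $\tau_\infty=\infty$ a.s. reduces to the a priori moment bound, so the crux is \eqref{xtp}.

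To derive the moment estimate I would work with the process $Y(t):=X(t)-D(X(t-\tau))$, which satisfies the non-neutral SDE $\d Y(t)=b(X(t),X(t-\tau))\,\d t+\sigma(X(t),X(t-\tau))\,\d W(t)$. By It\^o's formula applied to $|Y(t)|^p$ and using the key inequality \eqref{monotone} from Remark \ref{remark1} (with $x=X(t)$, $y=X(t-\tau)$), together with the Burkholder--Davis--Gundy inequality to handle the martingale part after taking $\sup_{0\le s\le t}$, I expect to arrive at a bound of the shape
\begin{equation*}
\E\Big(\sup_{0\le s\le t\wedge\tau_R}|Y(s)|^p\Big)\le C+C\int_0^t\E\Big(\sup_{0\le r\le s\wedge\tau_R}|Y(r)|^p\Big)\,\d s+C\int_0^t\E\big(|V(X(s-\tau),0)|^p|X(s-\tau)|^p\big)\,\d s.
\end{equation*}
The delay term is then controlled by splitting the integral over intervals of length $\tau$: on $[0,\tau]$ the argument $X(s-\tau)=\xi(s-\tau)$ and the polynomial bound \eqref{vixy} gives a contribution controlled by $\E\|\xi\|_\infty^{p(l+1)}<\infty$ (one needs $\E\|\xi\|_\infty^q$ finite for a suitably large power $q$; here I would either strengthen the hypothesis on $\xi$ or note that the statement already allows $C=C(\xi,p,T)$ and $\xi$ is a continuous process so all its moments are assumed finite in the localized step — more carefully, I would run the whole argument inside a further localization $\|\xi\|_\infty\le m$ and remove it at the end). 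Inductively, on $[(k-1)\tau,k\tau]$ the delay contribution is bounded by $\sup_{0\le s\le (k-1)\tau}\E|X(s)|^{p(l+1)}$, which is already controlled from the previous step (after first running the argument with $p$ replaced by $p(l+1)$). Finally I recover $\E\sup|X(s)|^p$ from $\E\sup|Y(s)|^p$ using $|X(t)|\le|Y(t)|+|D(X(t-\tau))|$ and \eqref{Dybound}, absorbing the $|D(X(t-\tau))|$ term into the delay integral, then apply Gronwall's inequality on each subinterval and let $R\to\infty$ via Fatou.

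The main obstacle is the interplay between the polynomial (super-linear) growth of the $V_i$ in the delay variable and the neutral term $D$: because $|D(y)|$ grows like $|y|^{l_3+1}$, the quantity $Y(t)$ and the pointwise bound on $|X(t)|$ are not simply comparable, and one must be careful that the $\sup$ over $[0,t]$ of $|X|$ is dominated in terms of $\sup$ of $|Y|$ plus a delay term that has already been estimated on the earlier subinterval. The step-by-step (length-$\tau$) induction is what makes this work: on each subinterval the ``delay'' data is frozen to the solution on the previous subinterval, whose moments of every order up to $p(l+1)$ were bounded in the previous inductive step, so no circularity arises. A secondary technical point is justifying the application of It\^o's formula and BDG before knowing global existence; this is handled cleanly by doing everything for the truncated solution $X_R$ up to time $t\wedge\tau_R$, obtaining a bound uniform in $R$, and concluding both $\tau_\infty=\infty$ and \eqref{xtp} simultaneously.
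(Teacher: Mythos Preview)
Your proposal is correct and follows essentially the same route as the paper: apply It\^o's formula to $|X(t)-D(X(t-\tau))|^p$, control the martingale term via BDG and \eqref{monotone}, recover $|X(t)|^p$ from this using \eqref{Dybound}, apply Gronwall, and close by induction on successive intervals of length~$\tau$ (each step requiring the previous step at a higher moment order $p(l+1)$). The paper is terser --- it simply asserts that (A1)--(A3) and Remark~\ref{remark1} give a unique local solution and then proves the a~priori bound --- whereas you spell out the truncation/stopping-time machinery and flag the need for higher moments of~$\xi$; these are legitimate technical refinements but not a different argument.
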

\begin{proof}
With assumptions (A1)-(A3) and Remark \ref{remark1}, it is easy to see that \eqref{brownian} has a unique local solution. To verify that \eqref{brownian} admits a unique global solution, it is sufficient to show \eqref{xtp}.
Applying the It\^{o} formula and using \eqref{monotone}, we have
\begin{equation}\label{xt-dxt}
\begin{split}
&|X(t)-D(X(t-\tau))|^p=|\xi(0)-D(\xi(-\tau))|^p\\
&+p\int_0^t|X(s)-D(X(s-\tau))|^{p-2}\langle X(s)-D(X(s-\tau)), b(X(s),X(s-\tau))\rangle\mbox{d}s\\
&+\frac{p(p-1)}{2}\int_0^t|X(s)-D(X(s-\tau))|^{p-2}\|\sigma(X(s),X(s-\tau))\|^2\mbox{d}s\\
&+p\int_0^t|X(s)-D(X(s-\tau))|^{p-2}\langle X(s)-D(X(s-\tau)), \sigma(X(s),X(s-\tau))\mbox{d}W(s)\rangle\\
\le&|\xi(0)-D(\xi(-\tau))|^p+\frac{p^2K}{2}\int_0^t|X(s)-D(X(s-\tau))|^{p-2}(1+|X(s)|^2)\mbox{d}s\\
&+\frac{p^2}{2}\int_0^t|X(s)-D(X(s-\tau))|^{p-2}|V(X(s-\tau),0)|^2|X(s-\tau)|^2\mbox{d}s\\
&+p\int_0^t|X(s)-D(X(s-\tau))|^{p-2}\langle X(s)-D(X(s-\tau)), \sigma(X(s),X(s-\tau))\mbox{d}W(s)\rangle\\
=:&|\xi(0)-D(\xi(-\tau))|^p+I_1(t)+I_2(t)+I_3(t).
\end{split}
\end{equation}
Application of the Burkholder-Davis-Gundy(BDG) inequality,  the Young inequality and \eqref{monotone} yields
\begin{equation}\label{sigmadws}
\begin{split}
&\mathbb{E}\left(\sup\limits_{0\le u\le t}|I_3(u)|\right)\le C\mathbb{E}\left(\int_0^t|X(s)-D(X(s-\tau))|^{2p-2}\|\sigma(X(s),X(s-\tau))\|^2\mbox{d}s\right)^{\frac{1}{2}}\\
\le&C\mathbb{E}\left(\sup\limits_{0\le u\le t}|X(u)-D(X(u-\tau))|^{2p-2}\int_0^t\|\sigma(X(s),X(s-\tau))\|^2\mbox{d}s\right)^{\frac{1}{2}}\\
\le&\frac{1}{4}\mathbb{E}\left(\sup\limits_{0\le u\le t}|X(u)-D(X(u-\tau))|^{p}\right)+C\mathbb{E}\left(\int_0^t\|\sigma(X(s),X(s-\tau))\|^2\mbox{d}s\right)^{\frac{p}{2}}\\
\le&\frac{1}{4}\mathbb{E}\left(\sup\limits_{0\le u\le t}|X(u)-D(X(u-\tau))|^{p}\right)+C\mathbb{E}\int_0^t[1+|X(s)|^p+|V(X(s-\tau),0)|^{p}|X(s-\tau)|^p]\mbox{d}s.
\end{split}
\end{equation}
Substituting \eqref{sigmadws} into \eqref{xt-dxt}, we obtain
\begin{equation*}
\begin{split}
&\mathbb{E}\left(\sup\limits_{0\le u\le t}|X(u)-D(X(u-\tau))|^{p}\right)\le C+C\mathbb{E}\int_0^t|X(s)|^p\mbox{d}s\\
&+C\mathbb{E}\int_0^t|V(X(s-\tau),0)|^{p}|X(s-\tau)|^p\mbox{d}s
+C\mathbb{E}\int_0^t|V_3(X(s-\tau),0)|^{p}|X(s-\tau)|^p\mbox{d}s.
\end{split}
\end{equation*}
By \eqref{vixy}, we see that
\begin{equation}\label{xt-dxtp}
\begin{split}
\mathbb{E}\left(\sup\limits_{0\le u\le t}|X(u)-D(X(u-\tau))|^{p}\right)\le& C+C\mathbb{E}\int_0^t|X(s)|^p\mbox{d}s\\
&+C\mathbb{E}\int_0^t|X(s-\tau)|^{(l+1)p}\mbox{d}s,
\end{split}
\end{equation}
where $l=l_1\vee l_2\vee l_3$. Then, with \eqref{Dybound}, we derive from \eqref{xt-dxtp} that
\begin{equation*}
\begin{split}
\mathbb{E}\left(\sup\limits_{0\le u\le t}|X(u)|^{p}\right)\le&C\mathbb{E}\left(\sup\limits_{0\le u\le t}|D(X(u-\tau))|^{p}\right)+ C\mathbb{E}\left(\sup\limits_{0\le u\le t}|X(u)-D(X(u-\tau))|^{p}\right)\\
\le&C+C\mathbb{E}\left(\sup\limits_{-\tau \le u\le t-\tau}|X(u)|^{(l_3+1)p}\right)\\
&+C\mathbb{E}\int_0^t|X(s)|^p\mbox{d}s+C\mathbb{E}\int_0^t|X(s-\tau)|^{(l+1)p}\mbox{d}s\\
\le&C(\|\xi\|^{(l+1)p}_\infty, p)+C\mathbb{E}\left(\sup\limits_{-\tau \le u\le t-\tau}|X(u)|^{(l+1)p}\right)\\
&+C\int_0^t\mathbb{E}\left(\sup\limits_{0\le u\le s}|X(u)|^{p}\right)\mbox{d}s,
\end{split}
\end{equation*}
where in the last step we have used the Young inequality. The Gronwall inequality then leads to
\begin{equation*}
\begin{split}
\mathbb{E}\left(\sup\limits_{0\le u\le t}|X(u)|^{p}\right)
\le&C+C\mathbb{E}\left(\sup\limits_{0\le u\le (t-\tau)\vee 0}|X(u)|^{(l+1)p}\right).
\end{split}
\end{equation*}
For $t\in[0,\tau]$, the above inequality implies
\begin{equation*}
\begin{split}
\mathbb{E}\left(\sup\limits_{0\le t\le \tau}|X(t)|^{p}\right)\le C,
\end{split}
\end{equation*}
this further gives
\begin{equation*}
\begin{split}
\mathbb{E}\left(\sup\limits_{0\le t\le 2\tau}|X(t)|^{p}\right)\le C+C\mathbb{E}\left(\sup\limits_{0\le t\le \tau}|X(t)|^{(l+1)p}\right)\le C.
\end{split}
\end{equation*}
Finally, the desired result can be obtained with induction.
\end{proof}

We now introduce $\theta$-EM scheme for \eqref{brownian}. Given any time $T>\tau>0$, without loss of generality, assume that $T$ and $\tau$ are rational numbers, and there exist two positive integers such that $\Delta=\frac{\tau}{m}=\frac{T}{M}$, where $\Delta\in (0,1)$ is the step size. For $k=-m, \cdots, 0$, set $y_{t_k}=\xi(k\Delta)$, for $k=0, 1, \cdots,M-1$, we form
\begin{equation}\label{discrete}
\begin{split}
y_{t_{k+1}}-D(y_{t_{k+1-m}})=&y_{t_k}-D(y_{t_{k-m}})+\theta b(y_{t_{k+1}}, y_{t_{k+1-m}})\Delta\\
&+(1-\theta) b(y_{t_{k}}, y_{t_{k-m}})\Delta+\sigma(y_{t_{k}}, y_{t_{k-m}})\Delta W_{t_k},
\end{split}
\end{equation}
where $t_k=k\Delta$, $\Delta W_{t_k}=W(t_{k+1})-W(t_k)$. Here $\theta\in [0,1]$ is an additional parameter that allows us to control the implicitness of the numerical scheme. For $\theta=0$, the $\theta$-EM scheme reduces to the EM scheme, and for $\theta=1$, it is exactly the backward EM scheme. For given $y_{t_k}$, in order to guarantee a unique solution $y_{t_{k+1}}$ to \eqref{discrete}, the step size is required to satisfy $\Delta<\frac{1}{4K_1^2\theta}$ according to a fixed point theorem (see Mao and Szpruch \cite{ms132} for more information), where $K_1$ is defined as in assumption (A1). In order for simplicity, we introduce the corresponding split-step theta scheme to \eqref{brownian} as follows: For $k=-m, \cdots, -1$, set $z_{t_k}=y_{t_k}=\xi(k\Delta)$, and for $k=0,\cdots,M-1$,
\begin{equation}\label{discrete2}
\begin{cases}
y_{t_{k}}=D(y_{t_{k-m}})+z_{t_{k}}-D(z_{t_{k-m}})+\theta b(y_{t_{k}},y_{t_{k-m}})\Delta,\\
z_{t_{k+1}}=D(z_{t_{k+1-m}})+z_{t_k}-D(z_{t_{k-m}})+b(y_{t_k},y_{t_{k-m}})\Delta+\sigma(y_{t_k},y_{t_{k-m}})\Delta W_{t_k}.
\end{cases}
\end{equation}
Through computation, we can easily deduce that $y_{t_{k+1}}$ in \eqref{discrete2} can be rewritten as the form of \eqref{discrete}. Due to the implicitness of $\theta$-EM scheme, we also require $\Delta<\frac{1}{2K\theta}$, where $K$ is defined as in Remark \ref{remark1}. Thus, throughout this paper, we set $\Delta^*\in(0,(2K\vee 4K_1^2)^{-1}\theta^{-1})$, and $0<\Delta\le\Delta^*$.

\subsection{Moment Bounds}
\begin{lem}\label{pmoment}
{\rm Let (A1)-(A3) hold. Then for $\theta\in[\frac{1}{2},1]$ there exists a positive constant $C$ independent of $\Delta$ such that for $p\ge2$,
\begin{equation*}
\begin{split}
\mathbb{E}\left(\sup\limits_{0\le k\le M}|y_{t_k}|^{p}\right)\le C.
\end{split}
\end{equation*}
}
\end{lem}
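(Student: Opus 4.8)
The plan is to work with the split-step formulation \eqref{discrete2}, since the intermediate stage $y_{t_k}$ is what makes the implicit term manageable. First I would take inner products in the first equation of \eqref{discrete2}: writing $u_k := z_{t_k} - D(z_{t_{k-m}})$ and $v_k := y_{t_k} - D(y_{t_{k-m}})$, the relation becomes $v_k = u_k + \theta b(y_{t_k}, y_{t_{k-m}})\Delta$. Squaring and using assumption (A1) in the form derived in Remark~\ref{remark1} (the monotonicity-type bound \eqref{monotone} applied to $\langle y_{t_k} - D(y_{t_{k-m}}), b(y_{t_k}, y_{t_{k-m}})\rangle$), one gets, for $\theta \ge \frac12$,
\[
|v_k|^2 \le |u_k|^2 + \theta\Delta\big(K(1+|y_{t_k}|^2) + |V(y_{t_{k-m}},0)|^2 |y_{t_{k-m}}|^2\big) + (\text{lower-order}),
\]
and more importantly, by absorbing the $2\theta\Delta\langle u_k, b\rangle$ cross term and using $\Delta < (2K\theta)^{-1}$, one should be able to bound $|y_{t_k}|^2$ (equivalently $|v_k|^2$) in terms of $|u_k|^2$ plus a delay contribution. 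This is the step where the restriction $\theta \in [\frac12,1]$ and the step-size bound $\Delta \le \Delta^*$ are essential: the implicit drift evaluation must dominate, not merely perturb.

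Next I would iterate the second equation of \eqref{discrete2}, which reads $u_{k+1} = u_k + b(y_{t_k},y_{t_{k-m}})\Delta + \sigma(y_{t_k},y_{t_{k-m}})\Delta W_{t_k}$. Raising $|u_{k+1}|^p$ and summing (or, better, taking $p$-th powers after first handling $p=2$ and bootstrapping), the martingale part is controlled by a discrete Burkholder--Davis--Gundy inequality exactly as in the proof of Lemma~\ref{exactxtp}, and the drift increment by the growth bound $2\langle u_k, b\rangle \le K(1+|y_{t_k}|^2) + |V(y_{t_{k-m}},0)|^2|y_{t_{k-m}}|^2$ from \eqref{monotone}. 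Combining with the first step to replace every occurrence of $|y_{t_j}|$ by $|z_{t_j}|$-type quantities (i.e. $|u_j|$ plus $|D(z_{t_{j-m}})|$, bounded via \eqref{Dybound}), I would arrive at a discrete Gronwall inequality of the form
\[
\E\Big(\max_{0\le k \le N}|u_k|^p\Big) \le C + C\Delta\sum_{j=0}^{N-1}\E\Big(\max_{0\le k\le j}|u_k|^p\Big) + C\E\Big(\max_{-m\le k \le N-m}|y_{t_k}|^{(l+1)p}\Big),
\]
with $l = l_1\vee l_2\vee l_3$, and the delay term handled by a step-by-step argument on the intervals $[0,\tau], [\tau,2\tau],\dots$ just as at the end of the proof of Lemma~\ref{exactxtp}: on $[0,\tau]$ the delay term is deterministic (the initial data $\xi$), giving a bound there, and then one induces forward over successive delay-length blocks. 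The discrete Gronwall lemma closes each block.

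The main obstacle I anticipate is the highly nonlinear delay dependence: the term $|V(y_{t_{k-m}},0)|^2 |y_{t_{k-m}}|^2$ grows like $|y_{t_{k-m}}|^{2(l+1)}$, so one cannot simply absorb it into a linear Gronwall estimate on a fixed interval $[0,T]$; this is precisely why the block-by-block induction over intervals of length $\tau$ is needed, and why the final constant $C$ depends on $T$ (through the number of blocks) and on higher moments $\E\|\xi\|_\infty^{(l+1)^{\lceil T/\tau\rceil} p}$ of the initial data. A secondary technical point is that the implicit term $\theta b(y_{t_k},y_{t_{k-m}})\Delta$ is highly nonlinear in $y_{t_k}$ itself, so bounding $|y_{t_k}|$ in terms of $|z_{t_k}|$ requires care: one uses the one-sided Lipschitz / monotonicity structure (not a growth bound) on the $\langle v_k, b\rangle$ inner product, together with $\Delta < (2K\theta)^{-1}$, to obtain $|y_{t_k}|^2 \le C|u_k|^2 + C\Delta|y_{t_{k-m}}|^{2(l+1)} + C\Delta$ without any spurious dependence on a negative power of $\Delta$. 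Once these two points are dealt with, the estimate propagates by induction over delay blocks and the proof concludes.
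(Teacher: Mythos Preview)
Your proposal is correct and follows essentially the same route as the paper: work with the split-step scheme \eqref{discrete2}, exploit the identity $v_k=u_k+\theta b\,\Delta$ together with \eqref{monotone} and $\theta\ge\tfrac12$ both to discard the $(1-2\theta)|b|^2\Delta^2$ term in the expansion of $|u_{k+1}|^2$ and to bound $|y_{t_k}|^2$ back in terms of $|u_k|^2$, control the martingale increments by a discrete BDG inequality, close with a discrete Gronwall argument, and finish by induction over delay-blocks of length~$\tau$. One small slip: in your bound $|y_{t_k}|^2 \le C|u_k|^2 + C\Delta|y_{t_{k-m}}|^{2(l+1)} + C\Delta$ the factor $\Delta$ in front of the delay term should be absent, since the neutral contribution $|D(y_{t_{k-m}})|^2\le |V_3(y_{t_{k-m}},0)|^2|y_{t_{k-m}}|^2$ carries no step-size factor (compare \eqref{discreteztk}); this is harmless for the block-by-block induction.
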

\begin{proof}
By \eqref{discrete2}, we see
\begin{equation*}
\begin{split}
|z_{t_{k+1}}-D(z_{t_{k+1-m}})|^2=&|z_{t_k}-D(z_{t_{k-m}})|^2+2\langle z_{t_k}-D(z_{t_{k-m}}),b(y_{t_k},y_{t_{k-m}})\Delta\rangle\\
&+|b(y_{t_k},y_{t_{k-m}})|^2\Delta^2+|\sigma(y_{t_k},y_{t_{k-m}})\Delta W_{t_k}|^2\\
&+2\langle z_{t_k}-D(z_{t_{k-m}})+b(y_{t_k},y_{t_{k-m}})\Delta,\sigma(y_{t_k},y_{t_{k-m}})\Delta W_{t_k}\rangle\\
=&|z_{t_k}-D(z_{t_{k-m}})|^2+2\langle y_{t_k}-D(y_{t_{k-m}}),b(y_{t_k},y_{t_{k-m}})\Delta\rangle\\
&+(1-2\theta)|b(y_{t_k},y_{t_{k-m}})|^2\Delta^2+|\sigma(y_{t_k},y_{t_{k-m}})\Delta W_{t_k}|^2\\
&+2\langle y_{t_k}-D(y_{t_{k-m}})+(1-\theta)b(y_{t_k},y_{t_{k-m}})\Delta,\sigma(y_{t_k},y_{t_{k-m}})\Delta W_{t_k}\rangle.
\end{split}
\end{equation*}
Noting that $\theta\ge\frac{1}{2}$ and substituting $b(y_{t_k},y_{t_{k-m}})\Delta=\frac{1}{\theta}[y_{t_k}-D(y_{t_{k-m}})-z_{t_k}+D(z_{t_{k-m}})]$ into the last term, and using \eqref{monotone} yields
\begin{equation*}
\begin{split}
|z_{t_{k+1}}-D(z_{t_{k+1-m}})|^2\le&|z_{t_k}-D(z_{t_{k-m}})|^2+2\Delta\langle y_{t_k}-D(y_{t_{k-m}}),b(y_{t_k},y_{t_{k-m}})\rangle\\
&+|\sigma(y_{t_k},y_{t_{k-m}})\Delta W_{t_k}|^2+\frac{2}{\theta}\langle y_{t_k}-D(y_{t_{k-m}}),\sigma(y_{t_k},y_{t_{k-m}})\Delta W_{t_k}\rangle\\
&-2\frac{1-\theta}{\theta}\langle z_{t_k}-D(z_{t_{k-m}}),\sigma(y_{t_k},y_{t_{k-m}})\Delta W_{t_k}\rangle\\
\le&|z_{t_k}-D(z_{t_{k-m}})|^2+\Delta K(1+|y_{t_k}|^2)+\Delta |V(y_{t_{k-m}},0)|^2|y_{t_{k-m}}|^2\\
&+|\sigma(y_{t_k},y_{t_{k-m}})\Delta W_{t_k}|^2+\frac{2}{\theta}\langle y_{t_k}-D(y_{t_{k-m}}),\sigma(y_{t_k},y_{t_{k-m}})\Delta W_{t_k}\rangle\\
&-2\frac{1-\theta}{\theta}\langle z_{t_k}-D(z_{t_{k-m}}),\sigma(y_{t_k},y_{t_{k-m}})\Delta W_{t_k}\rangle.
\end{split}
\end{equation*}
Summing both sides from 0 to $k$, we get
\begin{equation}\label{ztk-Dztk}
\begin{split}
|z_{t_{k+1}}-D(z_{t_{k+1-m}})|^2\le&|z_{t_0}-D(z_{t_{-m}})|^2+KT+\Delta K\sum\limits_{i=0}^k|y_{t_i}|^2+\Delta \sum\limits_{i=0}^k|V(y_{t_{i-m}},0)|^2|y_{t_{i-m}}|^2\\
&+\sum\limits_{i=0}^k|\sigma(y_{t_i},y_{t_{i-m}})\Delta W_{t_i}|^2+\frac{2}{\theta}\sum\limits_{i=0}^k\langle y_{t_i}-D(y_{t_{i-m}}),\sigma(y_{t_i},y_{t_{i-m}})\Delta W_{t_i}\rangle\\
&-2\frac{1-\theta}{\theta}\sum\limits_{i=0}^k\langle z_{t_i}-D(z_{t_{i-m}}),\sigma(y_{t_i},y_{t_{i-m}})\Delta W_{t_i}\rangle.
\end{split}
\end{equation}
Using the elementary inequality
\begin{equation}\label{elementary}
\left|\sum\limits_{i=1}^nx_i\right|^p\le n^{p-1}\sum\limits_{i=1}^n|x_i|^p, \,  p\ge 1,
\end{equation}
we then have
\begin{equation*}
\begin{split}
&|z_{t_{k+1}}-D(z_{t_{k+1-m}})|^{2p}\le6^{p-1}(|z_{t_0}-D(z_{t_{-m}})|^2+KT)^{p}+6^{p-1}K^p\Delta^p\left(\sum\limits_{i=0}^k|y_{t_i}|^2\right)^{p}\\
&+6^{p-1}\Delta^p\left(\sum\limits_{i=0}^k|V(y_{t_{i-m}},0)|^2|y_{t_{i-m}}|^2\right)^p+6^{p-1}\left(\sum\limits_{i=0}^k|\sigma(y_{t_i},y_{t_{i-m}})\Delta W_{t_i}|^2\right)^p\\
&+6^{p-1}4^p\left|\sum\limits_{i=0}^k\langle y_{t_i}-D(y_{t_{i-m}}),\sigma(y_{t_i},y_{t_{i-m}})\Delta W_{t_i}\rangle\right|^p\\
&+6^{p-1}2^p\left|\sum\limits_{i=0}^k\langle z_{t_i}-D(z_{t_{i-m}}),\sigma(y_{t_i},y_{t_{i-m}})\Delta W_{t_i}\rangle\right|^p.
\end{split}
\end{equation*}
For $0<j<M$, it is easy to observe that
\begin{equation*}
\begin{split}
\mathbb{E}\left[\sup\limits_{0\le k\le j}\left(\sum\limits_{i=0}^k|y_{t_i}|^2\right)^{p}\right]\le M^{p-1}\sum\limits_{i=0}^j\mathbb{E}|y_{t_i}|^{2p},
\end{split}
\end{equation*}
and
\begin{equation*}
\begin{split}
\mathbb{E}\left[\sup\limits_{0\le k\le j}\left(\sum\limits_{i=0}^k|V(y_{t_{i-m}},0)|^2|y_{t_{i-m}}|^2\right)^p\right]\le M^{p-1}\sum\limits_{i=0}^j\mathbb{E}(|V(y_{t_{i-m}},0)|^{2p}|y_{t_{i-m}}|^{2p}).
\end{split}
\end{equation*}
By assumption (A2), we compute
\begin{equation*}
\begin{split}
&\mathbb{E}\left[\sup\limits_{0\le k\le j}\left(\sum\limits_{i=0}^k|\sigma(y_{t_i},y_{t_{i-m}})\Delta W_{t_i}|^2\right)^p\right]\\
\le& M^{p-1}\mathbb{E}\left(\sum\limits_{i=0}^j|\sigma(y_{t_i},y_{t_{i-m}})|^{2p}|\Delta W_{t_i}|^{2p}\right)
\le M^{p-1}\sum\limits_{i=0}^j\mathbb{E}|\sigma(y_{t_i},y_{t_{i-m}})|^{2p}\mathbb{E}|\Delta W_{t_i}|^{2p}\\
\le&M^{p-1}(2p-1)!!\Delta^p\sum\limits_{i=0}^j\mathbb{E}[K(1+|y_{t_i}|^2)+|V(y_{t_{i-m}},0)|^2|y_{t_{i-m}}|^2]^{p}\\
\le&C+C\sum\limits_{i=0}^j\mathbb{E}|y_{t_i}|^{2p}+C\sum\limits_{i=0}^j\mathbb{E}(|V(y_{t_{i-m}},0)|^{2p}|y_{t_{i-m}}|^{2p}).
\end{split}
\end{equation*}
With (A2)-(A3), the H\"{o}lder inequality and the BDG inequality, we get
\begin{equation*}
\begin{split}
&\mathbb{E}\left[\sup\limits_{0\le k\le j}\left|\sum\limits_{i=0}^k\langle y_{t_i}-D(y_{t_{i-m}}),\sigma(y_{t_i},y_{t_{i-m}})\Delta W_{t_i}\rangle\right|^p\right]\\
\le& C\mathbb{E}\left(\sum\limits_{i=0}^j|y_{t_i}-D(y_{t_{i-m}})|^2|\sigma(y_{t_i},y_{t_{i-m}})|^{2}\Delta\right)^{\frac{p}{2}}\\
\le& C\Delta^{\frac{p}{2}}(j+1)^{\frac{p}{2}-1}\mathbb{E}\sum\limits_{i=0}^j|y_{t_i}-D(y_{t_{i-m}})|^p[K(1+|y_{t_i}|^2)+|V(y_{t_{i-m}},0)|^2|y_{t_{i-m}}|^2]^{\frac{p}{2}}\\
\le&C+C\sum\limits_{i=0}^j\mathbb{E}|y_{t_i}|^{2p}+C\sum\limits_{i=0}^j\mathbb{E}(|V_3(y_{t_{i-m}},0)|^{2p}|y_{t_{i-m}}|^{2p})+C\sum\limits_{i=0}^j\mathbb{E}(|V(y_{t_{i-m}},0)|^{2p}|y_{t_{i-m}}|^{2p}).
\end{split}
\end{equation*}
Similarly, with (A2) and the BDG inequality again
\begin{equation*}
\begin{split}
&\mathbb{E}\left[\sup\limits_{0\le k\le j}\left|\sum\limits_{i=0}^k\langle z_{t_i}-D(z_{t_{i-m}}),\sigma(y_{t_i},y_{t_{i-m}})\Delta W_{t_i}\rangle\right|^p\right]\\
\le& C\mathbb{E}\left(\sum\limits_{i=0}^j|z_{t_i}-D(z_{t_{i-m}})|^2|\sigma(y_{t_i},y_{t_{i-m}})|^{2}\Delta\right)^{\frac{p}{2}}\\
\le&C\Delta^{\frac{p}{2}}(j+1)^{\frac{p}{2}-1}\mathbb{E}\sum\limits_{i=0}^j|z_{t_i}-D(z_{t_{i-m}})|^p[K(1+|y_{t_i}|^2)+|V(y_{t_{i-m}},0)|^2|y_{t_{i-m}}|^2]^{\frac{p}{2}}\\
\le&C+C\sum\limits_{i=0}^j\mathbb{E}(|z_{t_i}-D(z_{t_{i-m}})|^{2p})+C\sum\limits_{i=0}^j\mathbb{E}|y_{t_i}|^{2p}+C\sum\limits_{i=0}^j\mathbb{E}(|V(y_{t_{i-m}},0)|^{2p}|y_{t_{i-m}}|^{2p}).
\end{split}
\end{equation*}
Sorting this inequalities together yields
\begin{equation*}
\begin{split}
&\mathbb{E}\left(\sup\limits_{0\le k\le j+1}|z_{t_{k}}-D(z_{t_{k-m}})|^{2p}\right)\\
\le&C+C\sum\limits_{i=0}^j\mathbb{E}(|z_{t_i}-D(z_{t_{i-m}})|^{2p})+C\sum\limits_{i=0}^j\mathbb{E}|y_{t_i}|^{2p}\\
&+C\sum\limits_{i=0}^j\mathbb{E}(|V(y_{t_{i-m}},0)|^{2p}|y_{t_{i-m}}|^{2p})+C\sum\limits_{i=0}^j\mathbb{E}(|V_3(y_{t_{i-m}},0)|^{2p}|y_{t_{i-m}}|^{2p})\\
\le&C+C\sum\limits_{i=0}^j\mathbb{E}\left(\sup\limits_{0\le k\le i}|z_{t_k}-D(z_{t_{k-m}})|^{2p}\right)+C\sum\limits_{i=0}^j\mathbb{E}|y_{t_i}|^{2p}+C\sum\limits_{i=0}^j\mathbb{E}|y_{t_{i-m}}|^{2p(l+1)}.
\end{split}
\end{equation*}
The discrete Gronwall inequality then leads to
\begin{equation}\label{ztk1-Dztk1}
\begin{split}
\mathbb{E}\left(\sup\limits_{0\le k\le j+1}|z_{t_{k}}-D(z_{t_{k-m}})|^{2p}\right)
\le&C+C\sum\limits_{i=0}^j\mathbb{E}|y_{t_i}|^{2p}+C\sum\limits_{i=0}^j\mathbb{E}|y_{t_{i-m}}|^{2p(l+1)}.
\end{split}
\end{equation}
Since $y_{t_k}-D(y_{t_{k-m}})=z_{t_k}-D(z_{t_{k-m}})+\theta b(y_{t_k},y_{t_{k-m}})\Delta$, we deduce from (A1)-(A3) that
\begin{equation}\label{discreteztk}
\begin{split}
&|z_{t_{k}}-D(z_{t_{k-m}})|^{2}\\
=&|y_{t_k}-D(y_{t_{k-m}})|^2+\theta^2\Delta^2|b(y_{t_k},y_{t_{k-m}})|^2-2\theta\Delta\langle y_{t_k}-D(y_{t_{k-m}}),b(y_{t_k},y_{t_{k-m}})\rangle\\
\ge&\frac{1}{2}|y_{t_k}|^2-|V_3(y_{t_{k-m}},0)|^2|y_{t_{k-m}}|^2-\theta\Delta[K(1+|y_{t_k}|^2)+|V(y_{t_{k-m}},0)|^2|y_{t_{k-m}}|^2]\\
=&\left(\frac{1}{2}-\theta K\Delta\right)|y_{t_k}|^2-[|V_3(y_{t_{k-m}},0)|^2+\theta\Delta|V(y_{t_{k-m}},0)|^2]|y_{t_{k-m}}|^2-\theta K\Delta,
\end{split}
\end{equation}
this implies
\begin{equation*}
\begin{split}
|y_{t_k}|^2\le&\left(\frac{1}{2}-\theta K\Delta\right)^{-1}\left[|z_{t_{k}}-D(z_{t_{k-m}})|^{2}+[|V_3(y_{t_{k-m}},0)|^2+\theta\Delta|V(y_{t_{k-m}},0)|^2]|y_{t_{k-m}}|^2+\theta K\Delta\right]\\
\le&\left(\frac{1}{2}-\theta K\Delta\right)^{-1}[|z_{t_{k}}-D(z_{t_{k-m}})|^{2}+2|V(y_{t_{k-m}},0)|^2|y_{t_{k-m}}|^2+\theta K\Delta].
\end{split}
\end{equation*}
By the elementary inequality \eqref{elementary} again, we derive from \eqref{ztk1-Dztk1} that
\begin{equation*}
\begin{split}
&\mathbb{E}\left(\sup\limits_{0\le k\le j+1}|y_{t_k}|^{2p}\right)\le\left(\frac{1}{2}-\theta\Delta K\right)^{-p}3^{p-1}\bigg[\mathbb{E}\left(\sup\limits_{0\le k\le j+1}|z_{t_{k}}-D(z_{t_{k-m}})|^{2p}\right)\\
&+2^p\mathbb{E}\left(\sup\limits_{0\le k\le j+1}|V(y_{t_{k-m}},0)|^{2p}|y_{t_{k-m}}|^{2p}\right)+(\theta\Delta K)^p\bigg]\\
\le&C+C\sum\limits_{i=0}^j\mathbb{E}|y_{t_i}|^{2p}+C\sum\limits_{i=-m}^{j-m}\mathbb{E}|y_{t_{i}}|^{2p(l+1)}
+C\mathbb{E}\left(\sup\limits_{0\le k\le j+1}|y_{t_{k-m}}|^{2p(l+1)}\right)\\
\le&C+C\sum\limits_{i=0}^j\mathbb{E}\left(\sup\limits_{0\le k\le i}|y_{t_k}|^{2p}\right)+C\mathbb{E}\left(\sup\limits_{0\le k\le (j+1-m)\vee0}|y_{t_{k}}|^{2p(l+1)}\right).
\end{split}
\end{equation*}
In case of $j\le m-1$, it is obvious that
\begin{equation*}
\begin{split}
&\mathbb{E}\left(\sup\limits_{0\le k\le m}|y_{t_k}|^{2p}\right)\le C.
\end{split}
\end{equation*}
Further, for $j\le2m-1$, it follows by the Gronwall inequality that
\begin{equation*}
\begin{split}
&\mathbb{E}\left(\sup\limits_{0\le k\le 2m}|y_{t_k}|^{2p}\right)
\le C+C\mathbb{E}\left(\sup\limits_{0\le k\le m}|y_{t_{k}}|^{2p(l+1)}\right)\le C.
\end{split}
\end{equation*}
The desired assertion follows by the method of induction.
\end{proof}

\begin{rem}
{\rm For $\theta\in[0,\frac{1}{2})$, besides assumptions (A1)-(A3), if we further assume that there exists a positive constant $\bar{K}$ such that for any $x\in\mathbb{R}^n$,
\begin{equation*}
|b(x,0)|\le \bar{K}(1+|x|),
\end{equation*}
we can also show that $p$-th moment of $\theta$-EM scheme is bounded by a positive constant independent of $\Delta$.
}
\end{rem}

\subsection{Convergence Rates}
We find it is convenient to work with a continuous form of a numerical method. Noting that the split-step $\theta$-EM scheme \eqref{discrete2} can be rewritten as
\begin{equation*}
\begin{split}
& z_{t_{k+1}}-D(z_{t_{k+1-m}})=z_{t_0}-D(z_{t_{-m}})+\sum\limits_{i=0}^kb(y_{t_i},y_{t_{i-m}})\Delta+\sum\limits_{i=0}^k\sigma(y_{t_i},y_{t_{i-m}})\Delta W_{t_i}\\
=&\xi(0)-D(\xi(-\tau))-\theta b(\xi(0),\xi(-\tau))\Delta+\sum\limits_{i=0}^kb(y_{t_i},y_{t_{i-m}})\Delta+\sum\limits_{i=0}^k\sigma(y_{t_i},y_{t_{i-m}})\Delta W_{t_i}.
\end{split}
\end{equation*}
Hence, we define the corresponding continuous-time split-step $\theta$-EM solution $Z(t)$ as follows: For any $t\in[-\tau,0)$, $Z(t)=\xi(t)$, $Z(0)=\xi(0)-\theta b(\xi(0), \xi(-\tau))\Delta$, For any $t\in[0,T]$,
\begin{equation}\label{continuous}
\mbox{d}[Z(t)-D(Z(t-\tau))]=b(\bar{Y}(t), \bar{Y}(t-\tau))\mbox{d}t+\sigma(\bar{Y}(t), \bar{Y}(t-\tau))\mbox{d}W(t),
\end{equation}
where $\bar{Y}(t)$ is defined by
\begin{equation*}
\bar{Y}(t):=y_{t_k} \quad \mbox{for} \quad t\in[t_k, t_{k+1}),
\end{equation*}
thus $\bar{Y}(t-\tau)=y_{t_{k-m}}$. We now define the continuous $\theta$-EM solution $Y(t)$ as follows:
\begin{equation}\label{ytztrelation}
Y(t)-D(Y(t-\tau))=Z(t)-D(Z(t-\tau))+\theta b(Y(t),Y(t-\tau))\Delta.
\end{equation}
It can be verified that $Y(t_k)=y_{t_k}$, $k=-m,\cdots,M$. In order to obtain convergence rate, we impose another assumption as follows:
\begin{enumerate}
\item[{\bf (A4)}]For $x,\bar{x},y\in\mathbb{R}^n$,$|b(x, y)-b(\bar{x}, y)|\le V_1(x,\bar{x})|x-\bar{x}|$.
\end{enumerate}

\begin{rem}
From assumptions (A1) and (A4), one sees that
\begin{equation*}
|b(x,y)|\le |b(x,y)-b(x,0)|+|b(x,0)-b(0,0)|+|b(0,0)|\le V_1(x,0)|x|+V_1(y,0)|y|+|b(0,0)|,
\end{equation*}
and further,
\begin{equation*}
|b(x,y)-b(\bar{x},\bar{y})|\le |b(x,y)-b(\bar{x},y)|+|b(\bar{x},y)-b(\bar{x},\bar{y})|\le V_1(x,\bar{x})|x-\bar{x}|+V_1(y,\bar{y})|y-\bar{y}|.
\end{equation*}
\end{rem}

\begin{lem}\label{ytytk}
{\rm
Consider the $\theta$-EM scheme \eqref{discrete}, and let (A1)-(A4) hold. Then, for any $p\ge 2$, the continuous form of $\theta$-EM scheme solution $Y(t)$ has the following properties,
\begin{equation*}
\mathbb{E}\left(\sup\limits_{0\le t\le T}|Y(t)|^p\right)\le C,
\end{equation*}
and
\begin{equation*}
\mathbb{E}\left(\sup\limits_{0\le t\le T}|Y(t)-\bar{Y}(t)|^p\right)\le C\Delta^{\frac{p}{2}},
\end{equation*}
where $C$ is a constant independent of $\Delta$.
}
\end{lem}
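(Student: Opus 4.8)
The plan is to reduce everything to the split-step solution $Z(t)$ and the moment bound of Lemma \ref{pmoment}, then pass from the discrete bounds to the continuous ones. First I would establish the $p$-th moment bound for $Y(t)$. From \eqref{ytztrelation} we have $Y(t)-D(Y(t-\tau))=Z(t)-D(Z(t-\tau))+\theta b(Y(t),Y(t-\tau))\Delta$; using assumptions (A1)+(A4) (in the form of the Remark preceding this lemma) to bound $|b(Y(t),Y(t-\tau))|$ by $V_1(Y(t),0)|Y(t)|+V_1(Y(t-\tau),0)|Y(t-\tau)|+|b(0,0)|$, together with the inequality \eqref{discreteztk}-type manipulation that lets one solve for $|Y(t)|^2$ in terms of $|Z(t)-D(Z(t-\tau))|^2$, $|Y(t-\tau)|^2$ and highly nonlinear terms in the delay variable, one gets a bound of the form $|Y(t)|^{2p}\le C(|Z(t)-D(Z(t-\tau))|^{2p}+\text{(delay terms)})$. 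Since $Z(t)-D(Z(t-\tau))$ is piecewise of the form $z_{t_k}-D(z_{t_{k-m}})+b(y_{t_k},y_{t_{k-m}})(t-t_k)+\sigma(y_{t_k},y_{t_{k-m}})(W(t)-W(t_k))$, its supremum over $[0,T]$ is controlled by the discrete quantities already bounded in \eqref{ztk1-Dztk1} plus a BDG estimate on the Brownian increment, all of which are finite by Lemma \ref{pmoment}. A step-by-step induction over the intervals $[0,\tau],[0,2\tau],\dots$ (as in the proof of Lemma \ref{pmoment}) then absorbs the delay terms and yields $\mathbb{E}(\sup_{0\le t\le T}|Y(t)|^p)\le C$.

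For the second estimate I would fix $t\in[t_k,t_{k+1})$ and write, from the continuous scheme \eqref{continuous} together with \eqref{ytztrelation},
\[
Y(t)-\bar Y(t)=\big[Z(t)-Z(t_k)\big]+\big[D(Z(t-\tau))-D(Z(t_k-\tau))\big]+\theta\big[b(Y(t),Y(t-\tau))-b(Y(t_k),Y(t_k-\tau))\big]\Delta,
\]
using $\bar Y(t)=y_{t_k}=Y(t_k)$. The first bracket equals $\int_{t_k}^t b(\bar Y(s),\bar Y(s-\tau))\,\d s+\int_{t_k}^t\sigma(\bar Y(s),\bar Y(s-\tau))\,\d W(s)$; raising to the $p$-th power, applying the elementary inequality \eqref{elementary}, Hölder in time on the drift integral, and BDG on the stochastic integral gives a bound of order $\Delta^{p}+\Delta^{p/2}$ times $\mathbb{E}\big(\text{polynomial in }\sup|\bar Y|\big)$, which is $\le C\Delta^{p/2}$ by the just-proved moment bound (and Lemma \ref{pmoment}), since $|b|,|\sigma|$ grow polynomially by (A1)/(A2)/(A4) and the Remarks. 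For the $D$-difference bracket I would use (A3), i.e. $|D(Z(t-\tau))-D(Z(t_k-\tau))|\le V_3(Z(t-\tau),Z(t_k-\tau))|Z(t-\tau)-Z(t_k-\tau)|$, and note that $Z(t-\tau)-Z(t_k-\tau)$ is again an increment of the same shifted split-step process over a time interval of length $\le\Delta$, so the same BDG/Hölder estimate applies, with the polynomial prefactor $V_3$ handled by Cauchy–Schwarz against the higher moments of $Z$ (equivalently $Y$). The $\theta b\Delta$ bracket is $O(\Delta)$ times a polynomial in the moments, hence $O(\Delta^{p/2})$ for $\Delta\le 1$. Taking $\sup_{0\le t\le T}$ and combining finishes the proof.

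The main obstacle is handling the highly nonlinear prefactors $V_i$ in the presence of the neutral term when solving for $|Y(t)|$: because $D$ itself is only polynomially Lipschitz, the bound for $|Y(t)|^{2p}$ produced from \eqref{ytztrelation} carries a term like $|Y(t-\tau)|^{2p(l+1)}$, so one genuinely needs the interval-by-interval induction rather than a single Gronwall argument, and one must be careful that the coefficient $(\tfrac12-\theta K\Delta)^{-1}$ stays bounded — which is exactly why the constraint $\Delta\le\Delta^*<(2K\theta)^{-1}$ was imposed. A secondary technical point is that all the moment estimates of the polynomial functionals of $\bar Y$ that appear after applying BDG must be uniform in $\Delta$; this is guaranteed by Lemma \ref{pmoment} but requires one to keep every application of Hölder/Young in terms of fixed exponents (depending only on $p$ and $l=l_1\vee l_2\vee l_3$, not on $\Delta$).
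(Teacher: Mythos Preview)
Your strategy for the first estimate is essentially the paper's: bound $\sup_t|Z(t)-D(Z(t-\tau))|^p$ via H\"older/BDG on the continuous equation \eqref{continuous} together with Lemma~\ref{pmoment}, then use the \eqref{discreteztk}-type manipulation on \eqref{ytztrelation} and induct over $[0,\tau],[0,2\tau],\dots$. That part is fine.

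For the second estimate, however, your decomposition is algebraically wrong and this causes a genuine gap. Subtracting \eqref{ytztrelation} at $t$ and at $t_k$ gives
\[
Y(t)-\bar Y(t)=\bigl[D(Y(t-\tau))-D(\bar Y(t-\tau))\bigr]+\Phi(t)+\theta\bigl[b(Y(t),Y(t-\tau))-b(\bar Y(t),\bar Y(t-\tau))\bigr]\Delta,
\]
where $\Phi(t):=\bigl[Z(t)-D(Z(t-\tau))\bigr]-\bigl[\bar Z(t)-D(\bar Z(t-\tau))\bigr]$. Two things go wrong in your version. First, the neutral term that survives is $D(Y(\cdot))-D(\bar Y(\cdot))$, \emph{not} $D(Z(\cdot))-D(Z(t_k-\cdot))$. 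Second, it is $\Phi(t)$, not the bare increment $Z(t)-Z(t_k)$, that equals $\int_{t_k}^t b\,\d s+\int_{t_k}^t\sigma\,\d W(s)$; the process $Z$ itself satisfies a neutral equation, so $Z(t)-Z(t_k)$ already contains an uncontrolled $D(Z(t-\tau))-D(Z(t_{k-m}))$ term. Consequently your plan to estimate the ``$D$-bracket'' by a direct BDG/H\"older increment bound on $Z$ does not work.

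Once the decomposition is written correctly, the $D(Y)$-bracket is $\le V_3(Y(t-\tau),\bar Y(t-\tau))\,|Y(t-\tau)-\bar Y(t-\tau)|$, which forces the same interval-by-interval induction you used in the first part; the paper does exactly this. For the $\theta b\Delta$-bracket, the paper uses the one-sided Lipschitz (A1) (a \eqref{discreteztk}-type manipulation) to solve for $|Y(t)-\bar Y(t)|^2$ in terms of $|\Phi|^2$ and delay terms, rather than your cruder bound $\theta\Delta(|b(Y,\cdot)|+|b(\bar Y,\cdot)|)$. Your crude bound would also work here, but note it yields $O(\Delta^p)$ only after taking expectations of polynomial moments of $Y$ and $\bar Y$, which you must justify with the first part of the lemma and Lemma~\ref{pmoment}.
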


\begin{proof}
For any $p\ge 2$, by the elementary inequality \eqref{elementary}, we have
\begin{equation*}
\begin{split}
&\mathbb{E}\left(\sup\limits_{0\le u\le t}|Z(u)-D(Z(u-\tau))|^p\right)\\
\le &3^{p-1}|Z(0)-D(Z(-\tau))|^p+3^{p-1}\mathbb{E}\left(\sup\limits_{0\le u\le t}\left|\int_0^u b(\bar{Y}(s), \bar{Y}(s-\tau))\mbox{d}s\right|^p\right)\\
&+3^{p-1}\mathbb{E}\left(\sup\limits_{0\le u\le t}\left|\int_{0}^u\sigma(\bar{Y}(s), \bar{Y}(s-\tau))\mbox{d}W(s)\right|^p\right).
\end{split}
\end{equation*}
Using the H\"{o}lder inequality, the BDG inequality, and together with (A2)-(A4), Lemma \ref{pmoment} yields
\begin{equation}\label{supcontin}
\begin{split}
&\mathbb{E}\left(\sup\limits_{0\le u\le t}|Z(u)-D(Z(u-\tau))|^p\right)\\
\le &3^{p-1}|Z(0)-D(Z(-\tau))|^p+3^{p-1}t^{p-1}\mathbb{E}\int_0^t\left|b(\bar{Y}(s), \bar{Y}(s-\tau))\right|^p\mbox{d}s\\
&+C\mathbb{E}\left(\int_{0}^t\|\sigma(\bar{Y}(s), \bar{Y}(s-\tau))\|^2\mbox{d}s\right)^{\frac{p}{2}}\\
\le& C+C\mathbb{E}\int_0^t\left[|V_1(\bar{Y}(s),0)|^p|\bar{Y}(s)|^p+|V_1(\bar{Y}(s-\tau),0)|^p|\bar{Y}(s-\tau)|^p+|b(0,0)|^p\right]\mbox{d}s\\
&+C\mathbb{E}\int_{0}^t[|\bar{Y}(s)|^p+|V(\bar{Y}(s-\tau))|^p|\bar{Y}(s-\tau)|^p]\mbox{d}s\\
\le&C+C\mathbb{E}\int_0^t|\bar{Y}(s)|^p\mbox{d}s+C\mathbb{E}\int_0^t|\bar{Y}(s)|^{(l+1)p}\mbox{d}s\le C.
\end{split}
\end{equation}
With the relationship \eqref{ytztrelation}, similar to \eqref{discreteztk}, we get
\begin{equation*}
\begin{split}
|Y(t)|^2\le&\left(\frac{1}{2}-\theta K\Delta\right)^{-1}[|Z(t)-D(Z(t-\tau))|^{2}+2|V(Y(t-\tau),0)|^2|Y(t-\tau)|^2+\theta K\Delta].
\end{split}
\end{equation*}
We then derive from \eqref{supcontin} that
\begin{equation*}
\begin{split}
\mathbb{E}\left(\sup\limits_{0\le u\le t}|Y(u)|^p\right)\le& C+C\mathbb{E}\left(\sup\limits_{0\le u\le t}|Z(u)-D(Z(u-\tau))|^p\right)+C\mathbb{E}\left(\sup\limits_{0\le u\le t}|Y(u-\tau)|^{(l+1)p}\right)\\
\le&C+C\mathbb{E}\left(\sup\limits_{0\le u\le (t-\tau)\vee 0}|Y(u)|^{(l+1)p}\right).
\end{split}
\end{equation*}
Following the procedure of Lemma \ref{exactxtp}, we can show that the $p$-th moment of $Y(t)$ is bounded by a positive constant $C$. Denote by $\bar{Z}(t):=z_{t_k}$ for $t\in[t_k, t_{k+1})$, we see from \eqref{continuous} that
\begin{equation*}
Z(t)-D(Z(t-\tau))-\bar{Z}(t)+D(\bar{Z}(t-\tau))=\int_{t_k}^tb(\bar{Y}(s), \bar{Y}(s-\tau))\mbox{d}s+\int_{t_k}^t\sigma(\bar{Y}(s), \bar{Y}(s-\tau))\mbox{d}W(s),
\end{equation*}
Denote by $\Phi(Z(t),\bar{Z}(t)):=Z(t)-D(Z(t-\tau))-\bar{Z}(t)+D(\bar{Z}(t-\tau))$, then
\begin{equation*}
\begin{split}
&\mathbb{E}\left(\sup\limits_{t_k\le t<t_{k+1}}|\Phi(Z(t),\bar{Z}(t))|^p\right)\le2^{p-1}\mathbb{E}\left(\sup\limits_{t_k\le t<t_{k+1}}\left|\int_{t_k}^t b(\bar{Y}(s), \bar{Y}(s-\tau))\mbox{d}s\right|^p\right)\\
&+2^{p-1}\mathbb{E}\left(\sup\limits_{t_k\le t<t_{k+1}}\left|\int_{t_k}^t\sigma(\bar{Y}(s), \bar{Y}(s-\tau))\mbox{d}W(s)\right|^p\right).
\end{split}
\end{equation*}
With (A2), (A4), Lemma \ref{pmoment}, the H\"{o}lder inequality, and the BDG inequality, we get
\begin{equation}\label{phizt}
\begin{split}
\mathbb{E}\left(\sup\limits_{t_k\le t<t_{k+1}}|\Phi(Z(t),\bar{Z}(t))|^p\right)
\le& 2^{p-1}\Delta^{p-1}\mathbb{E}\left[\int_{t_k}^{t_{k+1}}\left|b(\bar{Y}(s), \bar{Y}(s-\tau))\right|^p\mbox{d}s\right]\\
&+C\mathbb{E}\left[\int_{t_k}^{t_{k+1}}\left\|\sigma(\bar{Y}(s), \bar{Y}(s-\tau))\right\|^2\mbox{d}s\right]^{\frac{p}{2}}\\
\le &C\Delta^{p}+C\Delta^{\frac{p}{2}}\le C\Delta^{\frac{p}{2}}.
\end{split}
\end{equation}
On the other hand, we have the following relationship between $\bar{Y}(t)$ and $\bar{Z}(t)$,
\begin{equation}\label{barytzrelation}
\bar{Y}(t)-D(\bar{Y}(t-\tau))=\bar{Z}(t)-D(\bar{Z}(t-\tau))+\theta b(\bar{Y}(t),\bar{Y}(t-\tau))\Delta.
\end{equation}
Combing \eqref{ytztrelation} and \eqref{barytzrelation} gives
\begin{equation*}
\begin{split}
Y(t)-\bar{Y}(t)=&D(Y(t-\tau))-D(\bar{Y}(t-\tau))+\Phi(Z(t),\bar{Z}(t))\\
&+\theta[b(Y(t),Y(t-\tau))-b(\bar{Y}(t),\bar{Y}(t-\tau))]\Delta.
\end{split}
\end{equation*}
Using similar skills of \eqref{discreteztk}, we derive from (A1) and (A3)
\begin{equation*}
\begin{split}
|Y(t)-\bar{Y}(t)|^2\le&\left(\frac{1}{2}-2\theta K_1^2\Delta\right)^{-1}\bigg\{|\Phi(Z(t),\bar{Z}(t))|^{2}+[|V_3(Y(t-\tau),\bar{Y}(t-\tau))|^2 \\ &+2\theta\Delta|V(Y(t-\tau),\bar{Y}(t-\tau))|^2]|Y(t-\tau)-\bar{Y}(t-\tau)|^2+2\theta\Delta K_1^2\bigg\}.
\end{split}
\end{equation*}
Obviously, due to \eqref{phizt},
\begin{equation*}
\begin{split}
&\mathbb{E}\left(\sup\limits_{0\le u\le t}|Y(u)-\bar{Y}(u)|^p\right)\\
&\le C\mathbb{E}\left(\sup\limits_{0\le u\le t}|\Phi(Z(u),\bar{Z}(u))|^{p}\right)
+C\mathbb{E}\left(\sup\limits_{0\le u\le t}|Y(u-\tau)-\bar{Y}(u-\tau)|^p\right)\\
&\le C\Delta^{\frac{p}{2}}+C\mathbb{E}\left(\sup\limits_{0\le u\le (t-\tau)\vee0}|Y(u)-\bar{Y}(u)|^p\right).
\end{split}
\end{equation*}
The desired result follows by repeating the techniques of Lemma \ref{exactxtp}.
\end{proof}

\begin{thm}\label{theorem1}
{\rm Let assumptions (A1)-(A4) hold and $\theta\in\left[\frac{1}{2},1\right]$. Then it holds that the $\theta$-EM solution $Y(t)$ converges to the exact solution $X(t)$ in $\mathcal{L}^p$ sense with order $\frac{1}{2}$, i.e.,
\begin{equation*}
\mathbb{E}\left(\sup\limits_{0\le t\le T}|Y(t)-X(t)|^p\right)\le C\Delta^{\frac{p}{2}}
\end{equation*}
for $p\ge 2$.
}
\end{thm}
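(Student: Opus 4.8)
The plan is to compare the continuous $\theta$-EM solution $Y(t)$ with the exact solution $X(t)$ through the neutral difference process $e(t):=[Y(t)-D(Y(t-\tau))]-[X(t)-D(X(t-\tau))]$, which satisfies a stochastic equation with zero initial data (up to the $O(\Delta)$ shift at $t=0$ coming from $Z(0)=\xi(0)-\theta b(\xi(0),\xi(-\tau))\Delta$, whose $p$-th moment is $O(\Delta^p)$ and hence negligible). Applying the It\^o formula to $|e(t)|^p$, the drift part splits into an inner-product term $\langle e(s), b(\bar Y(s),\bar Y(s-\tau))-b(X(s),X(s-\tau))\rangle$ and an It\^o-correction term involving $\|\sigma(\bar Y(s),\bar Y(s-\tau))-\sigma(X(s),X(s-\tau))\|^2$. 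The first step is to insert the intermediate point $b(Y(s),Y(s-\tau))$ and use (A1): the ``genuinely one-sided'' part $\langle Y(s)-D(Y(s-\tau))-X(s)+D(X(s-\tau)), b(Y(s),Y(s-\tau))-b(X(s),X(s-\tau))\rangle$ is bounded by $K_1^2|Y(s)-X(s)|^2 + |V_1(Y(s-\tau),X(s-\tau))|^2|Y(s-\tau)-X(s-\tau)|^2$, while the residual $\langle e(s), b(\bar Y(s),\bar Y(s-\tau))-b(Y(s),Y(s-\tau))\rangle$ is controlled via (A4) and Lemma \ref{ytytk} (the $|Y-\bar Y|$ bound of order $\Delta^{1/2}$), after bounding the polynomial weights $V_1$ by moment estimates from Lemmas \ref{exactxtp}, \ref{pmoment}, \ref{ytytk} and H\"older's inequality.

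Next I would handle the diffusion terms. Using (A2), $\|\sigma(\bar Y(s),\bar Y(s-\tau))-\sigma(X(s),X(s-\tau))\|\le K_2|\bar Y(s)-X(s)|+V_2(\bar Y(s-\tau),X(s-\tau))|\bar Y(s-\tau)-X(s-\tau)|$, and then $|\bar Y(s)-X(s)|\le |\bar Y(s)-Y(s)|+|Y(s)-X(s)|$, so the $\Delta^{1/2}$-accurate approximation of $Y$ by $\bar Y$ again feeds an $O(\Delta^{p/2})$ error after taking $p$-th moments, while the remaining $|Y(s)-X(s)|$ and delayed-term contributions are kept for Gronwall. The stochastic integral term is dealt with by the Burkholder--Davis--Gundy inequality exactly as in \eqref{sigmadws}: pull out $\sup_{0\le u\le t}|e(u)|^{2p-2}$, absorb a fraction (say $1/4$) of $\mathbb{E}\sup_{0\le u\le t}|e(u)|^p$ into the left-hand side by Young's inequality, and leave an integral of the $\sigma$-difference bounded as above. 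Passing from $e(t)$ back to $Y(t)-X(t)$ uses the same algebraic manipulation as in \eqref{discreteztk}: from $Y(t)-X(t)=D(Y(t-\tau))-D(X(t-\tau))+e(t)$ and (A3) one gets $|Y(t)-X(t)|\le |e(t)| + V_3(Y(t-\tau),X(t-\tau))|Y(t-\tau)-X(t-\tau)|$, so that $\mathbb{E}\sup_{0\le u\le t}|Y(u)-X(u)|^p \le C\,\mathbb{E}\sup_{0\le u\le t}|e(u)|^p + C\,\mathbb{E}\sup_{0\le u\le (t-\tau)\vee 0}|Y(u)-X(u)|^{p}$ — with the delay contribution on a strictly earlier interval.

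Collecting everything yields an inequality of the form
\begin{equation*}
\mathbb{E}\left(\sup\limits_{0\le u\le t}|Y(u)-X(u)|^p\right)\le C\Delta^{\frac{p}{2}}+C\int_0^t\mathbb{E}\left(\sup\limits_{0\le u\le s}|Y(u)-X(u)|^p\right)\mbox{d}s+C\,\mathbb{E}\left(\sup\limits_{0\le u\le (t-\tau)\vee0}|Y(u)-X(u)|^p\right),
\end{equation*}
where all the polynomial weights $V_i$ have already been removed by H\"older's inequality combined with the uniform moment bounds (this is why moment bounds of all orders, not just order $p$, are needed). The Gronwall inequality on $[0,\tau]$ gives the rate $\Delta^{p/2}$ there (the delay term vanishes since $Y=X=\xi$ on $[-\tau,0]$), and then induction over successive intervals $[k\tau,(k+1)\tau]$ propagates it to $[0,T]$, exactly mirroring the induction closing the proof of Lemma \ref{exactxtp}. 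The main obstacle is the bookkeeping in the first paragraph: one must carefully match the neutral term $D$ inside the inner product of (A1) with the genuine difference $Y(t)-X(t)$, control the cross terms produced by replacing $\bar Y$ with $Y$ without losing the $\Delta^{1/2}$ order, and ensure every product of a polynomial weight $V_i$ with a power of a solution difference is split by H\"older so that the moment bounds of Lemmas \ref{exactxtp}--\ref{ytytk} apply; the rest is the by-now-standard BDG/Young/Gronwall/induction machinery already used above.
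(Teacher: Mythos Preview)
Your approach matches the paper's: It\^o formula on the neutral error process, insert $(Y,Y)$ between $(\bar Y,\bar Y)$ and $(X,X)$ so as to invoke (A1)--(A4), H\"older to remove the polynomial weights $V_i$ via the moment bounds, BDG plus Young on the stochastic integral, Gronwall, and finally induction over $\tau$-intervals.

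There is one slip in your definition of $e(t)$. The process that satisfies the clean SDE \eqref{continuous} with drift $b(\bar Y,\bar Y)$ and diffusion $\sigma(\bar Y,\bar Y)$ is $Z(t)-D(Z(t-\tau))$, not $Y(t)-D(Y(t-\tau))$; by \eqref{ytztrelation} these differ by $\theta b(Y(t),Y(t-\tau))\Delta$, so your $e$ is \emph{not} an It\^o process with the coefficients you wrote. The paper therefore sets $e(t):=Z(t)-D(Z(t-\tau))-X(t)+D(X(t-\tau))$, which gives $e(0)=-\theta b(\xi(0),\xi(-\tau))\Delta$ (your parenthetical about the $Z(0)$-shift is really pointing at this). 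The $\theta b(Y,Y)\Delta$ correction then resurfaces in two places: first as a cross term when applying (A1) inside the inner product (to use (A1) one must replace $e(s)$ by $Y(s)-D(Y(s-\tau))-X(s)+D(X(s-\tau))$, and the paper's estimate of $H_3+H_6$ carries an explicit $|\theta b(Y,Y)\Delta|\,|b(Y,Y)-b(X,X)|$ line), and second when converting back from $e$ to $Y-X$, where the paper writes
\[
|Y(t)-X(t)|^p\le 3^{p-1}\big(|e(t)|^p+\theta^p\Delta^p|b(Y(t),Y(t-\tau))|^p+|D(Y(t-\tau))-D(X(t-\tau))|^p\big),
\]
with the middle term controlled by the moment bounds. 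With this small adjustment your outline is exactly the paper's proof.
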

\begin{proof}
Denote by $e(t):=Z(t)-D(Z(t-\tau))-X(t)+D(X(t-\tau))$, then
\begin{equation*}
\begin{split}
e(t)=&e(0)+\int_0^t[b(\bar{Y}(s),\bar{Y}(s-\tau))-b(X(s),X(s-\tau))]\mbox{d}s\\
&+\int_0^t[\sigma(\bar{Y}(s),\bar{Y}(s-\tau))-\sigma(X(s),X(s-\tau))]\mbox{d}W(s),
\end{split}
\end{equation*}
where $e(0)=-\theta b(\xi(0), \xi(-\tau))\Delta$. Application of the It\^{o} formula yields
\begin{equation*}
\begin{split}
|e(t)|^p=&|e(0)|^p+p\int_0^t |e(s)|^{p-2}\langle e(s),b(\bar{Y}(s),\bar{Y}(s-\tau))-b(X(s),X(s-\tau))\rangle\mbox{d}s\\
&+\frac{1}{2}p(p-1)\int_0^t |e(s)|^{p-2}\|\sigma(\bar{Y}(s),\bar{Y}(s-\tau))-\sigma(X(s),X(s-\tau))\|^2\mbox{d}s\\
&+p\int_0^t |e(s)|^{p-2}\langle e(s),\sigma(\bar{Y}(s),\bar{Y}(s-\tau))-\sigma(X(s),X(s-\tau))\mbox{d}W(s)\rangle.
\end{split}
\end{equation*}
Rewrite $|e(t)|^p$ as
\begin{equation*}
\begin{split}
|e(t)|^p\le&|e(0)|^p+p\int_0^t |e(s)|^{p-2}\langle e(s),b(\bar{Y}(s),\bar{Y}(s-\tau))-b(Y(s),\bar{Y}(s-\tau))\rangle\mbox{d}s\\
&+p\int_0^t |e(s)|^{p-2}\langle e(s),b(Y(s),\bar{Y}(s-\tau))-b(Y(s),Y(s-\tau))\rangle\mbox{d}s\\
&+p\int_0^t |e(s)|^{p-2}\langle e(s),b(Y(s),Y(s-\tau))-b(X(s),X(s-\tau))\rangle\mbox{d}s\\
&+\frac{3}{2}p(p-1)\int_0^t |e(s)|^{p-2}\|\sigma(\bar{Y}(s),\bar{Y}(s-\tau))-\sigma(Y(s),\bar{Y}(s-\tau))\|^2\mbox{d}s\\
&+\frac{3}{2}p(p-1)\int_0^t |e(s)|^{p-2}\|\sigma(Y(s),\bar{Y}(s-\tau))-\sigma(Y(s),Y(s-\tau))\|^2\mbox{d}s\\
&+\frac{3}{2}p(p-1)\int_0^t |e(s)|^{p-2}\|\sigma(Y(s),Y(s-\tau))-\sigma(X(s),X(s-\tau))\|^2\mbox{d}s\\
&+p\int_0^t |e(s)|^{p-2}\langle e(s),\sigma(\bar{Y}(s),\bar{Y}(s-\tau))-\sigma(X(s),X(s-\tau))\mbox{d}W(s)\rangle\\
=:&|e(0)|^p+H_1(t)+H_2(t)+H_3(t)+H_4(t)+H_5(t)+H_6(t)+H_7(t).
\end{split}
\end{equation*}
By (A4), Lemma \ref{ytytk} and the H\"{o}lder inequality,
\begin{equation*}
\begin{split}
& \mathbb{E}\left(\sup\limits_{0\le u\le t}|H_1(u)|\right)\\
\le&C\mathbb{E}\int_0^t|e(s)|^{p}\mbox{d}s+C\mathbb{E}\int_0^t|b(\bar{Y}(s),\bar{Y}(s-\tau))-b(Y(s),\bar{Y}(s-\tau))|^p\mbox{d}s\\
\le &C\mathbb{E}\int_0^t[|Y(s)-X(s)|^p+|V_3(Y(s-\tau),X(s-\tau))|^p|Y(s-\tau)-X(s-\tau)|^p\\
&+\theta^p\Delta^p|b(Y(s),Y(s-\tau))|^p]\mbox{d}s+C\mathbb{E}\int_0^t|V_1(\bar{Y}(s),Y(s))|^p|\bar{Y}(s)-Y(s)|^p\mbox{d}s\\
\le &C\mathbb{E}\int_0^t|Y(s)-X(s)|^p\mbox{d}s+C\int_0^t[\mathbb{E}|V_3(Y(s-\tau),X(s-\tau))|^{2p}]^{\frac{1}{2}}[\mathbb{E}|Y(s-\tau)-X(s-\tau)|^{2p}]^{\frac{1}{2}}\mbox{d}s\\
&+C\Delta^p\mathbb{E}\int_0^t[V_1(Y(s),0)|Y(s)|+V_1(Y(s-\tau),0)|Y(s-\tau)|+|b(0,0)|]^p\mbox{d}s\\
&+C\int_0^t[\mathbb{E}|V_1(\bar{Y}(s),Y(s))|^{2p}]^{\frac{1}{2}}[\mathbb{E}|\bar{Y}(s)-Y(s)|^{2p}]^{\frac{1}{2}}\mbox{d}s\\
\le&C\int_0^t\mathbb{E}\left(\sup\limits_{0\le u\le s}|Y(u)-X(u)|^p\right)\mbox{d}s+C\Delta^{p}+C\Delta^{\frac{p}{2}}.
\end{split}
\end{equation*}
By (A1), Lemma \ref{ytytk} and the H\"{o}lder inequality,
\begin{equation*}
\begin{split}
& \mathbb{E}\left(\sup\limits_{0\le u\le t}|H_2(u)|\right)\\
\le&C\mathbb{E}\int_0^t|e(s)|^{p}\mbox{d}s+C\mathbb{E}\int_0^t|b(Y(s),\bar{Y}(s-\tau))-b(Y(s),Y(s-\tau))|^p\mbox{d}s\\
\le &C\mathbb{E}\int_0^t|e(s)|^{p}\mbox{d}s+C\mathbb{E}\int_0^t|V(\bar{Y}(s-\tau),Y(s-\tau))|^p|\bar{Y}(s-\tau)-Y(s-\tau)|^p\mbox{d}s\\
\le&C\int_0^t\mathbb{E}\left(\sup\limits_{0\le u\le s}|Y(u)-X(u)|^p\right)\mbox{d}s+C\Delta^{p}+C\Delta^{\frac{p}{2}}.
\end{split}
\end{equation*}
Due to (A1)-(A2), Lemma \ref{ytytk} and the H\"{o}lder inequality,
\begin{equation*}
\begin{split}
& \mathbb{E}\left(\sup\limits_{0\le u\le t}|H_3(u)+H_6(u)|\right)\le C\mathbb{E}\int_0^t|e(s)|^{p-2}|Y(s)-X(s)|^{2}\mbox{d}s\\
&+C\mathbb{E}\int_0^t|e(s)|^{p-2}|V_1(Y(s-\tau),X(s-\tau))|^2|Y(s-\tau)-X(s-\tau)|^{2}\mbox{d}s\\
&+C\mathbb{E}\int_0^t|e(s)|^{p-2}|V_2(Y(s-\tau),X(s-\tau))|^2|Y(s-\tau)-X(s-\tau)|^{2}\mbox{d}s\\
&+C\mathbb{E}\int_0^t |e(s)|^{p-2}|\theta b(Y(s),Y(s-\tau))\Delta||b(Y(s),Y(s-\tau))-b(X(s),X(s-\tau))|\mbox{d}s\\
\le&C\mathbb{E}\int_0^t|Y(s)-X(s)|^{p}\mbox{d}s+C\Delta^p\le C\int_0^t \mathbb{E}\left(\sup\limits_{0\le u\le s}|Y(u)-X(u)|^p\right)\mbox{d}s+C\Delta^{p}.
\end{split}
\end{equation*}
In the same way to estimate $H_1(t)$ and $H_2(t)$, we get
\begin{equation*}
\begin{split}
\mathbb{E}\left(\sup\limits_{0\le u\le t}|H_4(u)|\right)\le&C\int_0^t\mathbb{E}\left(\sup\limits_{0\le u\le s}|Y(u)-X(u)|^p\right)\mbox{d}s+C\Delta^{p}+C\Delta^{\frac{p}{2}},
\end{split}
\end{equation*}
and
\begin{equation*}
\begin{split}
 \mathbb{E}\left(\sup\limits_{0\le u\le t}|H_5(u)|\right)
\le C\int_0^t \mathbb{E}\left(\sup\limits_{0\le u\le s}|Y(u)-X(u)|^p\right)\mbox{d}s+C\Delta^{p}+C\Delta^{\frac{p}{2}}.
\end{split}
\end{equation*}
Furthermore, by (A3), Lemma \ref{ytytk}, the BDG inequality and the H\"{o}lder inequality, we compute
\begin{equation*}
\begin{split}
& \mathbb{E}\left(\sup\limits_{0\le u\le t}|H_7(u)|\right)\le C\mathbb{E}\left(\int_0^t |e(s)|^{2p-2}\|\sigma(\bar{Y}(s),\bar{Y}(s-\tau))-\sigma(X(s),X(s-\tau))\|^{2}\mbox{d}s\right)^{\frac{1}{2}}\\
\le&\frac{1}{4}\mathbb{E}\left(\sup\limits_{0\le u\le t}|e(u)|^p\right)+C\int_0^t \mathbb{E}\left(\sup\limits_{0\le u\le s}|Y(u)-X(u)|^p\right)\mbox{d}s+C\Delta^{\frac{p}{2}}+C\Delta^{p}.
\end{split}
\end{equation*}
Consequently, by sorting $H_1(t)-H_7(t)$ together, we arrive at
\begin{equation*}
\begin{split}
\mathbb{E}\left(\sup\limits_{0\le u\le t}|e(u)|^p\right)\le C\int_0^t \mathbb{E}\left(\sup\limits_{0\le u\le s}|Y(u)-X(u)|^p\right)\mbox{d}s+C\Delta^{\frac{p}{2}}.
\end{split}
\end{equation*}
By the definition of $e(t)$, we derive from (A3) that
\begin{equation*}
\begin{split}
|Y(t)-X(t)|^p\le& 3^{p-1}|e(t)|^p+3^{p-1}|\theta b(Y(t),Y(t-\tau))\Delta|^p+3^{p-1}|D(Y(t-\tau))-D(X(t-\tau))|^p\\
\le&3^{p-1}|e(t)|^p+3^{p-1}\theta^p\Delta^p|b(Y(t),Y(t-\tau))|^p\\
&+3^{p-1}|V_3(Y(t-\tau),X(t-\tau))|^p|Y(t-\tau)-X(t-\tau)|^p.
\end{split}
\end{equation*}
Taking (A1) and Lemma \ref{ytytk} into consideration,
\begin{equation*}
\begin{split}
\mathbb{E}\left(\sup\limits_{0\le u\le t}|Y(u)-X(u)|^p\right)
\le&C\mathbb{E}\left(\sup\limits_{0\le u\le t}|e(u)|^p\right)+C\Delta^{p}+C\mathbb{E}\left(\sup\limits_{0\le u\le t}|Y(u-\tau)-X(u-\tau)|^p\right)\\
\le&C\int_0^t\mathbb{E}\left(\sup\limits_{0\le u\le s}|Y(u)-X(u)|^p\right)\mbox{d}s+C\Delta^{\frac{p}{2}}\\
&+C\Delta^{p}+C\mathbb{E}\left(\sup\limits_{0\le u\le (t-\tau)\vee0}|Y(u)-X(u)|^p\right).
\end{split}
\end{equation*}
The Gronwall inequality yields
\begin{equation*}
\begin{split}
\mathbb{E}\left(\sup\limits_{0\le u\le t}|Y(u)-X(u)|^p\right)
\le&C\Delta^{\frac{p}{2}}+C\mathbb{E}\left(\sup\limits_{0\le u\le (t-\tau)\vee0}|Y(u)-X(u)|^p\right).
\end{split}
\end{equation*}
Again, the desired result follows by the induction.

\end{proof}

With strong convergence rate given in Theorem \ref{theorem1}, we can easily show the following result on almost sure convergence.
\begin{thm}\label{astheorem}
{\rm Let the conditions of Theorem \ref{theorem1} hold. Then the continuous form of $\theta$-EM scheme \eqref{discrete} converges to the exact solution of \eqref{brownian} almost surely with order $\alpha<\frac{1}{2}$, i.e., there exists a finite random variable $\zeta_\alpha$ such that
\begin{equation*}
\sup\limits_{0\le t\le T}|Y(t)-X(t)|\le\zeta_\alpha\Delta^\alpha
\end{equation*}
for $\alpha\in(0,\frac{1}{2})$.
}
\end{thm}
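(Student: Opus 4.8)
The plan is to deduce the almost sure statement from the strong $\mathcal L^p$-rate of Theorem~\ref{theorem1} by a Chebyshev inequality together with the Borel--Cantelli lemma, applied along a suitable sequence of step sizes. Fix $\alpha\in(0,\frac12)$ and choose an integer $p\ge 2$ large enough that $p(\frac12-\alpha)>1$; this is possible precisely because $\frac12-\alpha>0$. Since $T/\tau$ is rational, write $T/\tau=\mathfrak p/\mathfrak q$ in lowest terms, and for $n\ge N_*$ large enough that $\tau/(\mathfrak q n)\le\Delta^*$ set $\Delta_n:=\tau/(\mathfrak q n)$. Then $\Delta_n=\tau/m_n=T/M_n$ with $m_n=\mathfrak q n$ and $M_n=\mathfrak p n$, so each $\Delta_n$ is an admissible step size, $\Delta_n\downarrow0$, and we may denote by $Y_n(\cdot)$ the associated continuous $\theta$-EM solution.

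By the Chebyshev inequality and Theorem~\ref{theorem1},
\begin{equation*}
\mathbb P\left(\sup_{0\le t\le T}|Y_n(t)-X(t)|>\Delta_n^{\alpha}\right)\le \Delta_n^{-\alpha p}\,\mathbb E\left(\sup_{0\le t\le T}|Y_n(t)-X(t)|^{p}\right)\le C\,\Delta_n^{p(\frac12-\alpha)}.
\end{equation*}
Since $p(\frac12-\alpha)>1$ we have $\sum_{n\ge N_*}\Delta_n^{p(\frac12-\alpha)}=(\tau/\mathfrak q)^{p(\frac12-\alpha)}\sum_{n\ge N_*}n^{-p(\frac12-\alpha)}<\infty$, so by the first Borel--Cantelli lemma, for almost every $\omega$ there is an integer $n_0(\omega)\ge N_*$ such that
\begin{equation*}
\sup_{0\le t\le T}|Y_n(t,\omega)-X(t,\omega)|\le \Delta_n^{\alpha}\qquad\text{for all }n\ge n_0(\omega).
\end{equation*}

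To obtain a single random constant valid for all $n\ge N_*$, put
\begin{equation*}
\zeta_\alpha(\omega):=1\vee\max_{N_*\le n<n_0(\omega)}\frac{\sup_{0\le t\le T}|Y_n(t,\omega)-X(t,\omega)|}{\Delta_n^{\alpha}},
\end{equation*}
which is finite for almost every $\omega$, being the maximum of finitely many quantities that are a.s.\ finite by Lemma~\ref{ytytk} and Lemma~\ref{exactxtp}. Then $\sup_{0\le t\le T}|Y_n(t)-X(t)|\le\zeta_\alpha\Delta_n^{\alpha}$ for every $n\ge N_*$, which is the asserted almost sure convergence of order $\alpha$.

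Given Theorem~\ref{theorem1}, the argument is routine; the only points needing attention are (i) choosing the moment order $p$ in terms of $\alpha$ so that the probabilities in the Chebyshev estimate form a summable series, and (ii) working along a sequence of \emph{admissible} step sizes $\Delta_n$ (those of the form $\tau/m=T/M$) that decreases fast enough for summability — Borel--Cantelli yields the rate along such a sequence rather than simultaneously for every step size tending to zero.
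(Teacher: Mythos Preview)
Your proof is correct and follows essentially the same route as the paper: apply Chebyshev's inequality to the $\mathcal L^p$-estimate of Theorem~\ref{theorem1} along a summable sequence of step sizes and invoke Borel--Cantelli. Your version is in fact more careful than the paper's, since you explicitly choose $p$ so that $p(\tfrac12-\alpha)>1$, construct an admissible sequence $\Delta_n=\tau/(\mathfrak q n)$ compatible with the constraint $\Delta=\tau/m=T/M$, and spell out the definition of $\zeta_\alpha$ covering the finitely many pre-threshold indices; the paper leaves all of this implicit.
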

\begin{proof}
Define a sequence $\Delta_k, k=1,2,\cdots$ such that $\Delta=\Delta_1>\Delta_2>\cdots$ and $\sum\limits_k\Delta_k^{\left(\frac{1}{2}-\alpha\right)p}<\infty, p\ge 2$. By the Chebyshev inequality and Theorem \ref{theorem1}, for $\alpha<\frac{1}{2}$
\begin{equation*}
\begin{split}
\sum\limits_k\mathbb{P}\left(\sup\limits_{0\le t\le T}|Y(t)-X(t)|>\Delta_k^\alpha\right)\le&\sum\limits_k\mathbb{E}\left(\sup\limits_{0\le t\le T}|Y(t)-X(t)|^p\right)\Delta_k^{-\alpha p}\\
\le& C\sum\limits_k\Delta_k^{\left(\frac{1}{2}-\alpha\right)p}<\infty.
\end{split}
\end{equation*}
The Borel-Cantelli lemma implies that there exists a finite random variable $\zeta_\alpha$ such that
\begin{equation*}
\sup\limits_{0\le t\le T}|Y(t)-X(t)|\le\zeta_\alpha\Delta^\alpha.
\end{equation*}
\end{proof}

\section{Convergence Rates for Pure Jumps Case}
In this section, we further introduce some notation. Let $N(\cdot,\cdot)$ be a Poisson random process with characteristic measure $\lambda$ on a measurable subset $U$ of $[0,\infty)$ such that $\lambda(U)<\infty$, then $\tilde{N}(\mbox{d}u,\mbox{d}t)=N(\mbox{d}u,\mbox{d}t)-\lambda(\mbox{d}u)\mbox{d}t$ is a compensated martingale process. We consider the following neutral SDDE with jumps on $\mathbb{R}^n$:
\begin{equation}\label{jump}
\begin{split}
\mbox{d}[X(t)-D(X(t-\tau))]=&b(X(t), X(t-\tau))\mbox{d}t+\int_U h(X(t), X(t-\tau),u)\tilde{N}(\mbox{d}u,\mbox{d}t), t\ge 0
\end{split}
\end{equation}
with initial data $X(\theta)=\xi(\theta)\in\mathcal{L}^p_{\mathscr{F}_0}([-\tau,0];\mathbb{R}^n)$ for $\theta\in[-\tau,0]$, i.e., $\xi$ is an $\mathscr{F}_0$-measurable $\mathcal{D}([-\tau,0];\mathbb{R}^n)$-valued random variable such that $\mathbb{E}\|\xi\|^p_\infty<\infty$ for $p\ge 2$, where $\mathcal{D}([-\tau,0];\mathbb{R}^n)$ denotes the space of all c\'{a}dl\'{a}g paths $\zeta:[-\tau, 0]\rightarrow\mathbb{R}^n$ with uniform norm $\|\zeta\|_\infty:=\sup_{-\tau\le\theta\le 0}|\zeta (\theta)|$. Here, $D:\mathbb{R}^n\rightarrow\mathbb{R}^n$, and $b:\mathbb{R}^n\times\mathbb{R}^n\rightarrow\mathbb{R}^n$, $h:\mathbb{R}^n\times\mathbb{R}^n\times U\rightarrow\mathbb{R}^n$ are measurable functions. We further assume that $b$ is a continuous function and $\int_U|u|^p\lambda(\mbox{d}u)<\infty$ for $p\ge 2$. Similar to Brownian motion case, for $x,y,\bar{x},\bar{y}\in\mathbb{R}^n$, we shall assume that:
\begin{enumerate}
\item[{\bf (A5)}] There exist positive constants $\bar{K}_2$ and $r\ge 1$ such that
\begin{equation*}
|h(x, y,u)-h(\bar{x},\bar{y},u)|\le [\bar{K}_2|x-\bar{x}|+V_2(y,\bar{y})|y-\bar{y}|]|u|^r, \mbox{and} |h(0,0,u)|\le |u|^r.
\end{equation*}
\end{enumerate}
\begin{rem}\label{remarkhbound}
With assumption (A5), we have
\begin{equation*}
\begin{split}
|h(x, y,u)|\le&|h(x, y,u)-h(0, 0,u)|+|h(0, 0,u)|\le[1+\bar{K}_2|x|+V_2(y,0)|y|]|u|^r.
\end{split}
\end{equation*}
\end{rem}

\begin{lem}\label{jumpexactxtp}
{\rm Let (A1), (A3) and (A5) hold. Then there exists a unique global solution to \eqref{jump}, moreover, the solution has the property that for any $p\ge 2$, $T>0$,
\begin{equation}\label{jumpxtp}
\mathbb{E}\left(\sup\limits_{0\le t\le T}|X(t)|^p\right)\le C,
\end{equation}
where $C=C(\xi, p, T)$ is a positive constant which only depends on the initial data $\xi$ and $p, T$.
}
\end{lem}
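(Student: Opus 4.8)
The plan is to follow the proof of Lemma~\ref{exactxtp} almost verbatim, replacing the It\^{o} formula for continuous semimartingales by the It\^{o} formula for processes with jumps. First I would establish the jump analogue of \eqref{monotone}: arguing as in Remark~\ref{remark1} but invoking (A5) and Remark~\ref{remarkhbound} in place of (A2), one gets for all $x,y\in\mathbb{R}^n$
\begin{equation*}
2\langle x-D(y),\,b(x,y)\rangle \,\vee\, \int_U |h(x,y,u)|^2\,\lambda(\mbox{d}u)\;\le\; K'(1+|x|^2) + |V'(y,0)|^2|y|^2
\end{equation*}
for a constant $K'$ and a function $V'$ of the polynomial type \eqref{vixy}, where one uses that $\int_U|u|^q\,\lambda(\mbox{d}u)<\infty$ for every $q\ge 2$ (applied with $q=2r$ and $q=rp$). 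Together with (A1), (A3) and the continuity of $b$, this yields a unique maximal local solution via the usual truncation/stopping-time procedure, so it suffices to prove the a priori bound \eqref{jumpxtp} for arbitrary fixed $T$, after which non-explosion follows.

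Next I would apply the It\^{o} formula for jump processes to $|X(t)-D(X(t-\tau))|^p$, writing $e(t):=X(t)-D(X(t-\tau))$. The finite-variation (drift) contribution is bounded exactly as in \eqref{xt-dxt} by means of the monotonicity estimate above. The jump part splits into a compensator term
\begin{equation*}
\int_0^t\!\!\int_U \big[\,|e(s)+h|^p-|e(s)|^p-p|e(s)|^{p-2}\langle e(s),h\rangle\,\big]\,\lambda(\mbox{d}u)\,\mbox{d}s
\end{equation*}
and a compensated martingale $\int_0^t\!\!\int_U[\,|e(s-)+h|^p-|e(s-)|^p\,]\,\tilde{N}(\mbox{d}u,\mbox{d}s)$, where $h=h(X(s-),X(s-\tau),u)$. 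For the compensator I would use the elementary inequality $\big|\,|a+b|^p-|a|^p-p|a|^{p-2}\langle a,b\rangle\,\big|\le C_p(|a|^{p-2}|b|^2+|b|^p)$ together with Remark~\ref{remarkhbound}, the finiteness of $\int_U|u|^q\lambda(\mbox{d}u)$, and a Young-type split of $|e(s)|^{p-2}|h|^2$, to bound it by $C\int_0^t[1+|X(s)|^p+|V_2(X(s-\tau),0)|^p|X(s-\tau)|^p]\mbox{d}s$.

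For the martingale term the Burkholder--Davis--Gundy inequality for Poisson integrals gives, as in \eqref{sigmadws},
\begin{equation*}
\mathbb{E}\Big(\sup\limits_{0\le u\le t}\Big|\int_0^u\!\!\int_U[\,|e(s-)+h|^p-|e(s-)|^p\,]\,\tilde{N}(\mbox{d}u,\mbox{d}s)\Big|\Big)\le C\,\mathbb{E}\Big(\int_0^t\!\!\int_U\big(|e(s)|^{2p-2}|h|^2+|h|^{2p}\big)\,\lambda(\mbox{d}u)\,\mbox{d}s\Big)^{1/2},
\end{equation*}
and after extracting $\sup\limits_{0\le u\le t}|e(u)|^{p-1}$ and applying Young's inequality this is absorbed into $\tfrac14\mathbb{E}\sup\limits_{0\le u\le t}|e(u)|^p$ plus an integral of the same type as above. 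Collecting the estimates and using \eqref{vixy} with $l:=l_1\vee l_2\vee l_3$ gives
\begin{equation*}
\mathbb{E}\Big(\sup\limits_{0\le u\le t}|e(u)|^p\Big)\le C+C\int_0^t\mathbb{E}\Big(\sup\limits_{0\le r\le s}|X(r)|^p\Big)\mbox{d}s+C\,\mathbb{E}\Big(\sup\limits_{-\tau\le u\le t-\tau}|X(u)|^{(l+1)p}\Big);
\end{equation*}
bounding $|D(X(t-\tau))|^p$ via \eqref{Dybound} and using Young's inequality exactly as in Lemma~\ref{exactxtp} turns this into a closed Gronwall inequality for $\mathbb{E}\sup\limits_{0\le u\le t}|X(u)|^p$ on $[0,\tau]$ (the delayed supremum being $\le C\,\mathbb{E}\|\xi\|_\infty^{(l+1)p}<\infty$), then on $[\tau,2\tau]$, and so on; induction over $[k\tau,(k+1)\tau]$ completes the bound, hence global existence and uniqueness.

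The main obstacle I anticipate is the careful handling of the jump terms: fixing the correct form of the It\^{o}/Kunita formula, controlling the compensated Poisson integral of $|e+h|^p-|e|^p$ by BDG, and --- as throughout the argument --- carrying the localizing stopping times along consistently so that all expectations are justified before the a priori bound is available. As in Lemma~\ref{exactxtp}, the polynomially growing exponent $(l+1)p$ produced by the ``highly nonlinear in the delay variable'' hypotheses must be absorbed only through the forward induction over intervals of length $\tau$, not in a single Gronwall step.
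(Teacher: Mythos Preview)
Your proposal is correct and matches the paper's approach: the paper itself omits the proof entirely, stating only that it is similar to that of Lemma~\ref{exactxtp}, and your outline is precisely the natural adaptation of that argument to the jump setting --- the jump It\^{o} formula, the split into compensator and compensated martingale, the Taylor-type bound $||a+b|^p-|a|^p-p|a|^{p-2}\langle a,b\rangle|\le C_p(|a|^{p-2}|b|^2+|b|^p)$, and the induction over intervals of length $\tau$. The only cosmetic difference is that for the martingale term the paper (cf.\ the proof of Theorem~\ref{theorem2} and Lemma~\ref{usefullemma}) tends to use the Marinelli--Pr\'ev\^ot--R\"ockner $L^p$ estimate rather than the square-root BDG form you wrote, but either route closes the argument.
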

\begin{proof}
We omit the proof here since it is similar to that of Lemma \ref{exactxtp}.
\end{proof}

We now introduce the $\theta$-EM scheme for \eqref{jump}. Given any time $T>\tau>0$, assume that $T$ and $\tau$ are rational numbers, and there exists two positive integers such that $\Delta=\frac{\tau}{m}=\frac{T}{M}$, where $\Delta\in (0,1)$ is the step size. For $k=-m, \cdots, 0$, set $y_{t_k}=\xi(k\Delta)$; For $k=0, 1, \cdots,M-1$, we form
\begin{equation}\label{jumpdiscrete}
\begin{split}
y_{t_{k+1}}-D(y_{t_{k+1-m}})=&y_{t_k}-D(y_{t_{k-m}})+\theta b(y_{t_{k+1}}, y_{t_{k+1-m}})\Delta\\
&+(1-\theta) b(y_{t_{k}}, y_{t_{k-m}})\Delta+\int_Uh(y_{t_{k}}, y_{t_{k-m}},u)\Delta\tilde{N}_k(\mbox{d}u),
\end{split}
\end{equation}
where $t_k=k\Delta$, and $\Delta \tilde{N}_k(\mbox{d}u)=\tilde{N}(t_{k+1},\mbox{d}u)-\tilde{N}(t_k,\mbox{d}u)$. Here $\theta\in [0,1]$ is an additional parameter that allows us to control the implicitness of the numerical scheme. For $\theta=0$, the $\theta$-EM scheme reduces to the EM scheme, and for $\theta=1$, it is the backward EM scheme. Here we always assume $\theta\ge 1/2$. The corresponding split-step $\theta$-EM scheme to \eqref{jump} is defined as follows: For $k=-m, \cdots, -1$, set $z_{t_k}=y_{t_k}=\xi(k\Delta)$; For $k=0, 1, \cdots,M-1$,
\begin{equation}\label{jumpdiscrete2}
\begin{cases}
y_{t_{k}}=D(y_{t_{k-m}})+z_{t_{k}}-D(z_{t_{k-m}})+\theta b(y_{t_{k}},y_{t_{k-m}})\Delta,\\
z_{t_{k+1}}=D(z_{t_{k+1-m}})+z_{t_k}-D(z_{t_{k-m}})+b(y_{t_k},y_{t_{k-m}})\Delta+\int_Uh(y_{t_{k}}, y_{t_{k-m}},u)\Delta\tilde{N}_k(\mbox{d}u).
\end{cases}
\end{equation}
It is easy to see $y_{t_{k+1}}$ in \eqref{jumpdiscrete2} can be rewritten as the form of \eqref{jumpdiscrete}. Due to the implicitness of $\theta$-EM scheme, we require $0<\Delta\le\Delta^*$, where $\Delta^*\in(0,(2K\vee 4K_1^2)^{-1}\theta^{-1})$, $K_1$ and $K$ are defined as in (A1) and Remark \ref{remark1} with $\sigma\equiv{\bf 0}$ respectively.

\subsection{Moment Bounds}
Firstly, we introduce an important lemma coming from \cite{mpr10}.
\begin{lem}\label{usefullemma}
{\rm Let $\phi:\mathbb{R}_+\times U\rightarrow\mathbb{R}^n$ be progressively measurable and assume that the right side is finite.
Then there exists a positive constant $C$ such that
\begin{equation*}
\begin{split}
&\mathbb{E}\left(\sup\limits_{0\le s\le t}\left|\int_0^s\int_U\phi(r-,u)\tilde{N}(\mbox{d}u,\mbox{d}r)\right|^p\right)\le C\mathbb{E}\int_0^t\int_U|\phi(s,u)|^p\lambda(\mbox{d}u)\mbox{d}s
\end{split}
\end{equation*}
for $p\ge 2$.
}
\end{lem}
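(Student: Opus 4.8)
The plan is to deduce the estimate from a standard moment inequality for stochastic integrals against a compensated Poisson random measure and then exploit the standing assumption $\lambda(U)<\8$ to merge the two terms that such an inequality produces into the single one appearing on the right-hand side. First I set $M_s:=\int_0^s\int_U\phi(r-,u)\,\tilde{N}(\d u,\d r)$; since the right-hand side of the claimed inequality is finite and $\lambda$ is a finite measure, H\"older's inequality (see the second display below) shows $\E\int_0^t\int_U|\phi(s,u)|^2\lambda(\d u)\d s<\8$, so $M$ is a square-integrable martingale. Applying the Burkholder--Davis--Gundy inequality together with the explicit form of the quadratic variation of a purely discontinuous martingale --- equivalently, invoking Kunita's first inequality in the form stated in \cite{mpr10} --- gives, for $p\ge2$,
\begin{equation*}
\E\Big(\sup_{0\le s\le t}|M_s|^p\Big)\le C_p\Big\{\E\Big(\int_0^t\int_U|\phi(s,u)|^2\lambda(\d u)\d s\Big)^{p/2}+\E\int_0^t\int_U|\phi(s,u)|^p\lambda(\d u)\d s\Big\}.
\end{equation*}

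It remains to control the first term. Because $\lambda$ is finite and the horizon is finite, $\mu(\d u,\d s):=(t\lambda(U))^{-1}\lambda(\d u)\d s$ is a probability measure on $[0,t]\times U$; Jensen's inequality applied to the convex map $r\mapsto r^{p/2}$ with respect to $\mu$, with integrand $|\phi|^2$, yields
\begin{equation*}
\Big(\int_0^t\int_U|\phi(s,u)|^2\lambda(\d u)\d s\Big)^{p/2}\le (t\lambda(U))^{p/2-1}\int_0^t\int_U|\phi(s,u)|^p\lambda(\d u)\d s.
\end{equation*}
Taking expectations, using $t\le T$, and substituting into the previous display gives
\begin{equation*}
\E\Big(\sup_{0\le s\le t}|M_s|^p\Big)\le C_p\big(1+(T\lambda(U))^{p/2-1}\big)\,\E\int_0^t\int_U|\phi(s,u)|^p\lambda(\d u)\d s,
\end{equation*}
which is the asserted bound with $C=C_p\big(1+(T\lambda(U))^{p/2-1}\big)$, a constant depending only on $p$, $T$ and $\lambda(U)$ and finite since the right-hand side is assumed finite.

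Everything after the first display is elementary, so the only genuinely nontrivial point is that first inequality, which I would simply quote from \cite{mpr10}. If instead one wants a self-contained argument, the main obstacle is precisely establishing that two-term estimate: one bounds $\E\sup_{s\le t}|M_s|^p$ by $C_p\,\E[M]_t^{p/2}$ via BDG, with $[M]_t=\int_0^t\int_U|\phi(s-,u)|^2N(\d u,\d s)$, writes $N=\tilde{N}+\lambda(\d u)\d s$ inside the power $p/2$, and treats the resulting compensated term --- the hard part --- by an induction over the dyadic ranges $p\in[2^k,2^{k+1}]$ (or by the Bichteler--Jacod truncation argument), the compensator part producing exactly the first term above; a further use of the finiteness of $\lambda(U)$ closes the recursion. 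Since in the sequel this lemma is used only as a black box, the citation route is preferable.
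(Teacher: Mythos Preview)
Your argument is correct, but note that the paper does not actually give a proof of this lemma: it is simply stated as ``coming from \cite{mpr10}'' and quoted without further justification. What you have supplied is therefore strictly more than the paper offers. Your route --- invoke the standard two-term Kunita/BDG estimate and then collapse the $L^2$-term into the $L^p$-term via Jensen's inequality against the probability measure $(t\lambda(U))^{-1}\lambda(\mbox{d}u)\mbox{d}s$, which is available precisely because of the standing hypothesis $\lambda(U)<\infty$ --- is exactly how one would extract the one-term form from the result in \cite{mpr10}, and your computation of the constant $C=C_p\big(1+(T\lambda(U))^{p/2-1}\big)$ is correct. The closing paragraph sketching a self-contained proof of the two-term inequality is fine as commentary but unnecessary here, since both you and the paper ultimately rely on \cite{mpr10} for that step.
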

\begin{lem}\label{jumppmoment}
{\rm Let (A1), (A3) and (A5) hold. Then, there exists a positive constant $C$ independent of $\Delta$ such that
\begin{equation*}
\begin{split}
\mathbb{E}\left(\sup\limits_{0\le k\le M}|y_{t_k}|^{p}\right)\le C
\end{split}
\end{equation*}
for $p\ge 2$.
}
\end{lem}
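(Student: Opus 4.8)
The plan is to mimic the structure of the proof of Lemma \ref{pmoment}, replacing the Brownian integral $\sigma(y_{t_i},y_{t_{i-m}})\Delta W_{t_i}$ by the compensated Poisson integral $\int_U h(y_{t_i},y_{t_{i-m}},u)\Delta\tilde N_i(\d u)$ throughout, and using Lemma \ref{usefullemma} wherever the BDG inequality was invoked. First I would expand $|z_{t_{k+1}}-D(z_{t_{k+1-m}})|^2$ from the split-step recursion \eqref{jumpdiscrete2}: the square of the jump increment produces a term $|\int_U h(y_{t_i},y_{t_{i-m}},u)\Delta\tilde N_i(\d u)|^2$, the cross term with $z_{t_k}-D(z_{t_{k-m}})+b\Delta$ is a martingale difference, and — exactly as in Lemma \ref{pmoment} — using $\theta\ge\tfrac12$ and the substitution $b(y_{t_k},y_{t_{k-m}})\Delta=\tfrac1\theta[y_{t_k}-D(y_{t_{k-m}})-z_{t_k}+D(z_{t_{k-m}})]$ together with the monotonicity bound \eqref{monotone} (with $\sigma\equiv{\bf 0}$) I would dominate the drift-related contribution by $\Delta K(1+|y_{t_k}|^2)+\Delta|V(y_{t_{k-m}},0)|^2|y_{t_{k-m}}|^2$. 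Summing from $0$ to $k$ gives the analogue of \eqref{ztk-Dztk}.

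Next I would raise to the power $p$ via the elementary inequality \eqref{elementary}, take $\sup_{0\le k\le j}$ and then expectations. The polynomial-in-$y$ sums are handled termwise as before: $\mathbb{E}[\sup_k(\sum_{i\le k}|y_{t_i}|^2)^p]\le M^{p-1}\sum_i\mathbb{E}|y_{t_i}|^{2p}$, and likewise for the $V$-weighted sums. For the jump terms I would use Lemma \ref{usefullemma} (in discrete time, i.e. applied to the piecewise-constant integrand) to obtain, for the "diagonal" square term,
\begin{equation*}
\mathbb{E}\left(\sup_{0\le k\le j}\left|\sum_{i=0}^k\int_U h(y_{t_i},y_{t_{i-m}},u)\Delta\tilde N_i(\d u)\right|^{2p}\right)\le C\,\Delta\sum_{i=0}^j\mathbb{E}\int_U|h(y_{t_i},y_{t_{i-m}},u)|^{2p}\lambda(\d u),
\end{equation*}
and then bound $|h|^{2p}$ using Remark \ref{remarkhbound} and $\int_U|u|^{2pr}\lambda(\d u)<\infty$ by $C[1+|y_{t_i}|^{2p}+|V_2(y_{t_{i-m}},0)|^{2p}|y_{t_{i-m}}|^{2p}]$; the martingale cross terms $\sum_i\langle y_{t_i}-D(y_{t_{i-m}}),\int_U h\,\Delta\tilde N_i\rangle$ and $\sum_i\langle z_{t_i}-D(z_{t_{i-m}}),\int_U h\,\Delta\tilde N_i\rangle$ are treated with Lemma \ref{usefullemma} plus Hölder exactly as $H$-type terms in Lemma \ref{pmoment}, producing in addition a $\sum_i\mathbb{E}|z_{t_i}-D(z_{t_{i-m}})|^{2p}$ and $V_3$-weighted terms via (A3). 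Collecting everything yields
\begin{equation*}
\mathbb{E}\left(\sup_{0\le k\le j+1}|z_{t_k}-D(z_{t_{k-m}})|^{2p}\right)\le C+C\sum_{i=0}^j\mathbb{E}\left(\sup_{0\le k\le i}|z_{t_k}-D(z_{t_{k-m}})|^{2p}\right)+C\sum_{i=0}^j\mathbb{E}|y_{t_i}|^{2p}+C\sum_{i=0}^j\mathbb{E}|y_{t_{i-m}}|^{2p(l+1)},
\end{equation*}
and the discrete Gronwall inequality removes the second term, giving the analogue of \eqref{ztk1-Dztk1}.

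Finally I would recover the bound on $y_{t_k}$ itself. Using $y_{t_k}-D(y_{t_{k-m}})=z_{t_k}-D(z_{t_{k-m}})+\theta b(y_{t_k},y_{t_{k-m}})\Delta$ and the same computation as \eqref{discreteztk} (again with $\sigma\equiv{\bf 0}$), I get $|z_{t_k}-D(z_{t_{k-m}})|^2\ge(\tfrac12-\theta K\Delta)|y_{t_k}|^2-2|V(y_{t_{k-m}},0)|^2|y_{t_{k-m}}|^2-\theta K\Delta$, hence $|y_{t_k}|^{2p}\le C[|z_{t_k}-D(z_{t_{k-m}})|^{2p}+|V(y_{t_{k-m}},0)|^{2p}|y_{t_{k-m}}|^{2p}+1]$, where $(\tfrac12-\theta K\Delta)^{-1}$ is bounded because $\Delta\le\Delta^*<(2K\theta)^{-1}$. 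Substituting the Gronwall bound and using \eqref{vixy} to turn the $V$-weights into a power $2p(l+1)$ of $y$ gives
\begin{equation*}
\mathbb{E}\left(\sup_{0\le k\le j+1}|y_{t_k}|^{2p}\right)\le C+C\sum_{i=0}^j\mathbb{E}\left(\sup_{0\le k\le i}|y_{t_k}|^{2p}\right)+C\mathbb{E}\left(\sup_{0\le k\le(j+1-m)\vee0}|y_{t_k}|^{2p(l+1)}\right).
\end{equation*}
On the first interval $j\le m-1$ the last term is a constant determined by the initial data $\xi$, so Gronwall gives a bound on $\mathbb{E}\sup_{0\le k\le m}|y_{t_k}|^{2p}$ for every $p\ge2$; feeding the $2p(l+1)$-moment bound on $[0,m]$ into the estimate on $[0,2m]$ and iterating over the finitely many delay-intervals up to $M$ yields the claim by induction. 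The main obstacle is purely bookkeeping: making sure the integrability hypothesis $\int_U|u|^p\lambda(\d u)<\infty$ is invoked at the right exponent $2pr$ (which is fine since we only ever need finitely many fixed moments) and that the induction on delay-intervals is carried out for $2p(l+1)$-moments simultaneously with $2p$-moments, exactly as in Lemma \ref{exactxtp} and Lemma \ref{pmoment}; no genuinely new estimate is required beyond substituting Lemma \ref{usefullemma} for BDG.
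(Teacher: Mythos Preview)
Your proposal is correct and follows the same route as the paper's own proof of Lemma~\ref{jumppmoment}: the argument of Lemma~\ref{pmoment} is repeated verbatim with Lemma~\ref{usefullemma} replacing the BDG inequality for the compensated-Poisson terms, and one then closes via the lower bound \eqref{discreteztk} and induction over delay intervals exactly as you describe. One small slip to fix when you write it out: after summing and raising to the $p$-th power, the ``diagonal'' contribution is $\bigl(\sum_{i\le k}\bigl|\int_U h\,\Delta\tilde N_i\bigr|^2\bigr)^p$, not $\bigl|\sum_{i\le k}\int_U h\,\Delta\tilde N_i\bigr|^{2p}$; the paper handles it by applying \eqref{elementary} termwise and then Lemma~\ref{usefullemma} to each increment, which leads to the same right-hand side you stated.
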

\begin{proof}
It is easy to see from \eqref{jumpdiscrete2}
\begin{equation*}
\begin{split}
&|z_{t_{k+1}}-D(z_{t_{k+1-m}})|^2=|z_{t_k}-D(z_{t_{k-m}})|^2+2\langle z_{t_k}-D(z_{t_{k-m}}),b(y_{t_k},y_{t_{k-m}})\Delta\rangle\\
&+|b(y_{t_k},y_{t_{k-m}})|^2\Delta^2+\left|\int_Uh(y_{t_{k}}, y_{t_{k-m}},u)\Delta\tilde{N}_k(\mbox{d}u)\right|^2\\
&+2\left\langle z_{t_k}-D(z_{t_{k-m}})+b(y_{t_k},y_{t_{k-m}})\Delta,\int_Uh(y_{t_{k}}, y_{t_{k-m}},u)\Delta\tilde{N}_k(\mbox{d}u)\right\rangle\\
=&|z_{t_k}-D(z_{t_{k-m}})|^2+2\langle y_{t_k}-D(y_{t_{k-m}}),b(y_{t_k},y_{t_{k-m}})\Delta\rangle\\
&+(1-2\theta)|b(y_{t_k},y_{t_{k-m}})|^2\Delta^2+\left|\int_Uh(y_{t_{k}}, y_{t_{k-m}},u)\Delta\tilde{N}_k(\mbox{d}u)\right|^2\\
&+2\left\langle y_{t_k}-D(y_{t_{k-m}})+(1-\theta)b(y_{t_k},y_{t_{k-m}})\Delta,\int_Uh(y_{t_{k}}, y_{t_{k-m}},u)\Delta\tilde{N}_k(\mbox{d}u)\right\rangle.
\end{split}
\end{equation*}
Applying \eqref{jumpdiscrete} to the last term and using assumption (A1) lead to
\begin{equation*}
\begin{split}
&|z_{t_{k+1}}-D(z_{t_{k+1-m}})|^2\le|z_{t_k}-D(z_{t_{k-m}})|^2+2\Delta\langle y_{t_k}-D(y_{t_{k-m}}),b(y_{t_k},y_{t_{k-m}})\rangle\\
&+\left|\int_Uh(y_{t_{k}}, y_{t_{k-m}},u)\Delta\tilde{N}_k(\mbox{d}u)\right|^2+\frac{2}{\theta}\left\langle y_{t_k}-D(y_{t_{k-m}}),\int_Uh(y_{t_{k}}, y_{t_{k-m}},u)\Delta\tilde{N}_k(\mbox{d}u)\right\rangle\\
&-2\frac{1-\theta}{\theta}\left\langle z_{t_k}-D(z_{t_{k-m}}),\int_Uh(y_{t_{k}}, y_{t_{k-m}},u)\Delta\tilde{N}_k(\mbox{d}u)\right\rangle\\
\le&|z_{t_k}-D(z_{t_{k-m}})|^2+\Delta K(1+|y_{t_k}|^2)+\Delta |V(y_{t_{k-m}},0)|^2|y_{t_{k-m}}|^2\\
&+\left|\int_Uh(y_{t_{k}}, y_{t_{k-m}},u)\Delta\tilde{N}_k(\mbox{d}u)\right|^2+\frac{2}{\theta}\left\langle y_{t_k}-D(y_{t_{k-m}}),\int_Uh(y_{t_{k}}, y_{t_{k-m}},u)\Delta\tilde{N}_k(\mbox{d}u)\right\rangle\\
&-2\frac{1-\theta}{\theta}\left\langle z_{t_k}-D(z_{t_{k-m}}),\int_Uh(y_{t_{k}}, y_{t_{k-m}},u)\Delta\tilde{N}_k(\mbox{d}u)\right\rangle.
\end{split}
\end{equation*}
Summing both sides from 0 to $k$, we deduce that
\begin{equation*}
\begin{split}
&|z_{t_{k+1}}-D(z_{t_{k+1-m}})|^2\le|z_{t_0}-D(z_{t_{-m}})|^2+KT+\Delta K\sum\limits_{i=0}^k|y_{t_i}|^2+\Delta \sum\limits_{i=0}^k|V(y_{t_{i-m}},0)|^2|y_{t_{i-m}}|^2\\
&+\sum\limits_{i=0}^k\left|\int_Uh(y_{t_{i}}, y_{t_{i-m}},u)\Delta\tilde{N}_i(\mbox{d}u)\right|^2+\frac{2}{\theta}\sum\limits_{i=0}^k\left\langle y_{t_i}-D(y_{t_{i-m}}),\int_Uh(y_{t_{i}}, y_{t_{i-m}},u)\Delta\tilde{N}_i(\mbox{d}u)\right\rangle\\
&-2\frac{1-\theta}{\theta}\sum\limits_{i=0}^k\left\langle z_{t_i}-D(z_{t_{i-m}}),\int_Uh(y_{t_{i}}, y_{t_{i-m}},u)\Delta\tilde{N}_i(\mbox{d}u)\right\rangle.
\end{split}
\end{equation*}
Consequently,
\begin{equation*}
\begin{split}
&|z_{t_{k+1}}-D(z_{t_{k+1-m}})|^{2p}\le6^{p-1}(|z_{t_0}-D(z_{t_{-m}})|^2+KT)^{p}+6^{p-1}K^p\Delta^p\left(\sum\limits_{i=0}^k|y_{t_i}|^2\right)^{p}\\
&+6^{p-1}\Delta^p\left(\sum\limits_{i=0}^k|V(y_{t_{i-m}},0)|^2|y_{t_{i-m}}|^2\right)^p+6^{p-1}\left(\sum\limits_{i=0}^k\left|\int_Uh(y_{t_{i}}, y_{t_{i-m}},u)\Delta\tilde{N}_i(\mbox{d}u)\right|^2\right)^p\\
&+6^{p-1}4^p\left|\sum\limits_{i=0}^k\left\langle y_{t_i}-D(y_{t_{i-m}}),\int_Uh(y_{t_{i}}, y_{t_{i-m}},u)\Delta\tilde{N}_i(\mbox{d}u)\right\rangle\right|^p\\
&+6^{p-1}2^p\left|\sum\limits_{i=0}^k\left\langle z_{t_i}-D(z_{t_{i-m}}),\int_Uh(y_{t_{i}}, y_{t_{i-m}},u)\Delta\tilde{N}_i(\mbox{d}u)\right\rangle\right|^p.
\end{split}
\end{equation*}
 With assumption (A5), we find that for $0<j<M$,
\begin{equation*}
\begin{split}
&\mathbb{E}\left[\sup\limits_{0\le k\le j}\left(\sum\limits_{i=0}^k\left|\int_Uh(y_{t_{i}}, y_{t_{i-m}},u)\Delta\tilde{N}_i(\mbox{d}u)\right|^2\right)^p\right]\\
\le& M^{p-1}C\mathbb{E}\left(\sum\limits_{i=0}^j\int_U|h(y_{t_{i}}, y_{t_{i-m}},u)|^{2p}\lambda(\mbox{d}u)\right)\\
\le& C\sum\limits_{i=0}^j\mathbb{E}\int_U([1+K_2|y_{t_i}|+V_2(y_{t_{i-m}},0)|y_{t_{i-m}}|]^{2p}|u|^{2pr})\lambda(\mbox{d}u)\\
\le&C+C\sum\limits_{i=0}^j\mathbb{E}|y_{t_i}|^{2p}+C\sum\limits_{i=0}^j\mathbb{E}(|V_2(y_{t_{i-m}},0)|^{2p}|y_{t_{i-m}}|^{2p}).
\end{split}
\end{equation*}
Using (A5), Lemma \ref{usefullemma} and the H\"{o}lder inequality, we compute
\begin{equation*}
\begin{split}
&\mathbb{E}\left[\sup\limits_{0\le k\le j}\left|\sum\limits_{i=0}^k\left\langle y_{t_i}-D(y_{t_{i-m}}),\int_Uh(y_{t_{i}}, y_{t_{i-m}},u)\Delta\tilde{N}_i(\mbox{d}u)\right\rangle\right|^p\right]\\
\le&C\mathbb{E}\left(\sum\limits_{i=0}^j|y_{t_i}-D(y_{t_{i-m}})|^2\int_U|h(y_{t_{i}}, y_{t_{i-m}},u)|^2\lambda(\mbox{d}u)\right)^\frac{p}{2}\\
\le&C\mathbb{E}\sum\limits_{i=0}^j|y_{t_i}-D(y_{t_{i-m}})|^p\int_U[1+K_2|y_{t_i}|+V_2(y_{t_{i-m}},0)|y_{t_{i-m}}|]^{p}|u|^{pr}\lambda(\mbox{d}u)\\
\le&C+C\sum\limits_{i=0}^j\mathbb{E}|y_{t_i}|^{2p}+C\sum\limits_{i=0}^j\mathbb{E}(|V_3(y_{t_{i-m}},0)|^{2p}|y_{t_{i-m}}|^{2p})+C\sum\limits_{i=0}^j\mathbb{E}(|V_2(y_{t_{i-m}},0)|^{2p}|y_{t_{i-m}}|^{2p}).
\end{split}
\end{equation*}
Similarly, by (A5) and Lemma \ref{usefullemma} again
\begin{equation*}
\begin{split}
&\mathbb{E}\left[\sup\limits_{0\le k\le j}\left|\sum\limits_{i=0}^k\left\langle z_{t_i}-D(z_{t_{i-m}}),\int_Uh(y_{t_{i}}, y_{t_{i-m}},u)\Delta\tilde{N}_i(\mbox{d}u)\right\rangle\right|^p\right]\\
\le&C+C\sum\limits_{i=0}^j\mathbb{E}|z_{t_i}-D(z_{t_{i-m}})|^{2p}+C\sum\limits_{i=0}^j\mathbb{E}|y_{t_i}|^{2p}+C\sum\limits_{i=0}^j\mathbb{E}(|V_2(y_{t_{i-m}},0)|^{2p}|y_{t_{i-m}}|^{2p}).
\end{split}
\end{equation*}
This implies that
\begin{equation*}
\begin{split}
&\mathbb{E}\left[\sup\limits_{0\le k\le j+1}|z_{t_{k}}-D(z_{t_{k-m}})|^{2p}\right]\\
\le&C+C\sum\limits_{i=0}^j\mathbb{E}|z_{t_i}-D(z_{t_{i-m}})|^{2p}+C\sum\limits_{i=0}^j\mathbb{E}|y_{t_i}|^{2p}\\
&+C\sum\limits_{i=0}^j\mathbb{E}(|V_2(y_{t_{i-m}},0)|^{2p}|y_{t_{i-m}}|^{2p})+C\sum\limits_{i=0}^j\mathbb{E}(|V_3(y_{t_{i-m}},0)|^{2p}|y_{t_{i-m}}|^{2p})\\
\le&C+C\sum\limits_{i=0}^j\mathbb{E}\left[\sup\limits_{0\le k\le i}|z_{t_k}-D(z_{t_{k-m}})|^{2p}\right]+C\sum\limits_{i=0}^j\mathbb{E}|y_{t_i}|^{2p}+C\sum\limits_{i=0}^j\mathbb{E}|y_{t_{i-m}}|^{2p(l+1)}.
\end{split}
\end{equation*}
By the discrete Gronwall inequality we find that
\begin{equation*}
\begin{split}
\mathbb{E}\left[\sup\limits_{0\le k\le j+1}|z_{t_{k}}-D(z_{t_{k-m}})|^{2p}\right]
\le&C+C\sum\limits_{i=0}^j\mathbb{E}|y_{t_i}|^{2p}+C\sum\limits_{i=0}^j\mathbb{E}|y_{t_{i-m}}|^{2p(l+1)}.
\end{split}
\end{equation*}
Following the steps of \eqref{ztk1-Dztk1}, the desired assertion can be derived by similar skills.
\end{proof}

\subsection{Convergence Rates}
Firstly, we define the corresponding continuous-time split-step $\theta$-EM solution $Z(t)$ as follows: For any $t\in[-\tau,0)$, $Z(t)=\xi(t)$, $Z(0)=\xi(0)-\theta b(\xi(0), \xi(-\tau))\Delta$; For any $t\in[0,T]$,
\begin{equation}\label{jumpcontinuous}
\mbox{d}[Z(t)-D(Z(t-\tau))]=b(\bar{Y}(t), \bar{Y}(t-\tau))\mbox{d}t+\int_U h(\bar{Y}(t), \bar{Y}(t-\tau),u)\tilde{N}(\mbox{d}u,\mbox{d}t),
\end{equation}
where $\bar{Y}(t)$ is defined by
\begin{equation*}
\bar{Y}(t):=y_{t_k} \quad \mbox{for} \quad t\in[t_k, t_{k+1}),
\end{equation*}
thus $\bar{Y}(t-\tau)=y_{t_{k-m}}$. The continuous form of $\theta$-EM solution $Y(t)$ is defined by
\begin{equation}\label{ytztrelation1y}
Y(t)-D(Y(t-\tau))=Z(t)-D(Z(t-\tau))+\theta b(Y(t),Y(t-\tau))\Delta.
\end{equation}
\begin{lem}\label{jumpytytk}
{\rm
Consider the $\theta$-EM scheme \eqref{jumpdiscrete}, and let (A1), (A3)-(A5) hold. Then, for any $p\ge 2$, the continuous form $Y(t)$ of $\theta$-EM scheme has the following properties:
\begin{equation*}
\mathbb{E}\left(\sup\limits_{0\le t\le T}|Y(t)|^p\right)\le C,
\end{equation*}
and
\begin{equation*}
\mathbb{E}\left(\sup\limits_{0\le t\le T}|Y(t)-\bar{Y}(t)|^p\right)\le C\Delta,
\end{equation*}
where $C$ is a constant independent of $\Delta$.
}
\end{lem}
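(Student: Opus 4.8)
The plan is to follow the proof of Lemma \ref{ytytk} almost line by line, replacing every appeal to the Burkholder--Davis--Gundy inequality by Lemma \ref{usefullemma} and the Brownian increment estimates by the corresponding estimates for the compensated Poisson integral. First I would establish the moment bound for $Y$. Starting from the continuous split-step equation \eqref{jumpcontinuous}, I would apply the elementary inequality \eqref{elementary} to split $|Z(u)-D(Z(u-\tau))|^p$ into the contribution of the initial value $Z(0)-D(Z(-\tau))$, a drift term, and a compensated jump term; bound the drift term by H\"{o}lder's inequality together with (A1), (A4) and the growth bound for $b$ recorded in the remark following (A4); bound the jump term by Lemma \ref{usefullemma}, (A5) and Remark \ref{remarkhbound}; and then invoke the discrete moment bound Lemma \ref{jumppmoment} to conclude $\mathbb{E}\big(\sup_{0\le u\le t}|Z(u)-D(Z(u-\tau))|^p\big)\le C$. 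Transferring this to $Y$ through the implicit relation \eqref{ytztrelation1y} requires the ``reverse'' quadratic estimate used in \eqref{discreteztk}, namely
\begin{equation*}
|Y(t)|^2\le\left(\tfrac12-\theta K\Delta\right)^{-1}\big[|Z(t)-D(Z(t-\tau))|^2+2|V(Y(t-\tau),0)|^2|Y(t-\tau)|^2+\theta K\Delta\big],
\end{equation*}
which is legitimate because $0<\Delta\le\Delta^*$. This yields a bound of the form $\mathbb{E}\big(\sup_{0\le u\le t}|Y(u)|^p\big)\le C+C\,\mathbb{E}\big(\sup_{0\le u\le(t-\tau)\vee0}|Y(u)|^{(l+1)p}\big)$, and a step-by-step induction over the intervals $[k\tau,(k+1)\tau]$, exactly as at the end of Lemma \ref{exactxtp}, closes the first assertion.

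For the one-step estimate I would set $\bar{Z}(t):=z_{t_k}$ for $t\in[t_k,t_{k+1})$ and $\Phi(Z(t),\bar{Z}(t)):=Z(t)-D(Z(t-\tau))-\bar{Z}(t)+D(\bar{Z}(t-\tau))$. Integrating \eqref{jumpcontinuous} over $[t_k,t]$ gives, for $t\in[t_k,t_{k+1})$,
\begin{equation*}
\Phi(Z(t),\bar{Z}(t))=\int_{t_k}^tb(\bar{Y}(s),\bar{Y}(s-\tau))\mbox{d}s+\int_{t_k}^t\int_Uh(\bar{Y}(s),\bar{Y}(s-\tau),u)\tilde{N}(\mbox{d}u,\mbox{d}s).
\end{equation*}
H\"{o}lder's inequality bounds the first term in $L^p$ by $C\Delta^{p}$, while Lemma \ref{usefullemma}, (A5), Remark \ref{remarkhbound} and Lemma \ref{jumppmoment} bound the second by $C\,\mathbb{E}\int_{t_k}^{t_{k+1}}\int_U|h(\bar{Y}(s),\bar{Y}(s-\tau),u)|^p\lambda(\mbox{d}u)\mbox{d}s\le C\Delta$; hence $\mathbb{E}\big(\sup_{t_k\le t<t_{k+1}}|\Phi(Z(t),\bar{Z}(t))|^p\big)\le C\Delta^p+C\Delta\le C\Delta$. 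It is precisely here that the Poisson case loses a power relative to the Brownian case: the compensated jump integral over a single step is of order $\Delta^{1/p}$ in $L^p$, not $\Delta^{1/2}$, which is why the rate in the second assertion is $\Delta$ rather than $\Delta^{p/2}$.

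Finally, combining \eqref{ytztrelation1y} with its grid-point counterpart $\bar{Y}(t)-D(\bar{Y}(t-\tau))=\bar{Z}(t)-D(\bar{Z}(t-\tau))+\theta b(\bar{Y}(t),\bar{Y}(t-\tau))\Delta$ gives
\begin{equation*}
Y(t)-\bar{Y}(t)=D(Y(t-\tau))-D(\bar{Y}(t-\tau))+\Phi(Z(t),\bar{Z}(t))+\theta[b(Y(t),Y(t-\tau))-b(\bar{Y}(t),\bar{Y}(t-\tau))]\Delta.
\end{equation*}
Squaring, using (A1), (A3), (A4) and again the ``reverse'' device, one isolates $|Y(t)-\bar{Y}(t)|^2$ on the left with a factor $(\tfrac12-2\theta K_1^2\Delta)^{-1}$; raising to the power $p/2$, taking the supremum over $[0,t]$, and applying H\"{o}lder's inequality together with \eqref{vixy} to the $V_i(\cdot,\cdot)$ factors (the needed higher moments of $Y$ and $\bar Y$ being supplied by the first part and by Lemma \ref{jumppmoment}) reduces matters to
\begin{equation*}
\mathbb{E}\Big(\sup_{0\le u\le t}|Y(u)-\bar{Y}(u)|^p\Big)\le C\Delta+C\int_0^t\mathbb{E}\Big(\sup_{0\le u\le s}|Y(u)-\bar{Y}(u)|^p\Big)\mbox{d}s+C\,\mathbb{E}\Big(\sup_{0\le u\le(t-\tau)\vee0}|Y(u)-\bar{Y}(u)|^p\Big),
\end{equation*}
which is closed by the Gronwall inequality followed by the interval-by-interval induction of Lemma \ref{exactxtp}. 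The routine part is the book-keeping with the nonlinear $V_i$-terms via H\"{o}lder's inequality; the only genuinely delicate points, beyond estimating the compensated jump integral through Lemma \ref{usefullemma}, are that $Y$ is defined only implicitly through \eqref{ytztrelation1y}, so one must pass through the one-sided ``reverse'' estimate and thereby respect the step-size restriction $0<\Delta\le\Delta^*$, and that one must keep the delay terms under control by tracking which moment of the delayed error is needed before running the induction.
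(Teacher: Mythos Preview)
Your proposal is correct and follows essentially the same approach as the paper. In fact, the paper's own proof is even terser: it defers almost entirely to Lemma \ref{ytytk} and only displays the one-step estimate for $\Phi$, obtaining $C\Delta^{p}+C\Delta\le C\Delta$ via Lemma \ref{usefullemma} exactly as you do; your added detail on the reverse quadratic estimate and the final Gronwall--induction step is consistent with what the paper does in the Brownian case and simply makes the reference to Lemma \ref{ytytk} explicit.
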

\begin{proof}
The proof is similar to that of Lemma \ref{ytytk}, here we only give the most critical part to show the differences between the Brownian motion case.
For $t\in[t_k, t_{k+1})$, \eqref{jumpcontinuous} gives that
\begin{equation*}
\begin{split}
&Z(t)-D(Z(t-\tau))-Z(t_k)+D(Z(t_{k-m}))\\
=&\int_{t_k}^tb(\bar{Y}(s), \bar{Y}(s-\tau))\mbox{d}s+\int_{t_k}^t\int_Uh(\bar{Y}(s), \bar{Y}(s-\tau),u)\tilde{N}(\mbox{d}u,\mbox{d}s).
\end{split}
\end{equation*}
Denote by $\Phi(Z(t),Z(t_k))=Z(t)-D(Z(t-\tau))-Z(t_k)+D(Z(t_{k-m}))$, then
\begin{equation*}
\begin{split}
\mathbb{E}\left(\sup\limits_{t_k\le t<t_{k+1}}|\Phi(Z(t),Z(t_k))|^p\right)\le &2^{p-1}\mathbb{E}\left(\sup\limits_{t_k\le t<t_{k+1}}\left|\int_{t_k}^t b(\bar{Y}(s), \bar{Y}(s-\tau))\mbox{d}s\right|^p\right)\\
&+2^{p-1}\mathbb{E}\left(\sup\limits_{t_k\le t<t_{k+1}}\left|\int_{t_k}^t\int_Uh(\bar{Y}(s), \bar{Y}(s-\tau),u)\tilde{N}(\mbox{d}u,\mbox{d}s)\right|^p\right).
\end{split}
\end{equation*}
Application of (A4), Lemmas \ref{usefullemma}-\ref{jumppmoment}, and the H\"{o}lder inequality give that
\begin{equation*}
\begin{split}
\mathbb{E}\left(\sup\limits_{t_k\le t<t_{k+1}}|\Phi(Z(t),Z(t_k))|^p\right)
\le& 2^{p-1}\Delta^{p-1}\mathbb{E}\int_{t_k}^{t_{k+1}}\left|b(\bar{Y}(s), \bar{Y}(s-\tau))\right|^p\mbox{d}s\\
&+C\mathbb{E}\int_{t_k}^{t_{k+1}}\int_U|h(\bar{Y}(s), \bar{Y}(s-\tau),u)|^p\lambda(\mbox{d}u)\mbox{d}s\\
\le &C\Delta^{p}+C\Delta\le C\Delta.
\end{split}
\end{equation*}
Following the proof of Lemma \ref{ytytk}, we shall get the desired result.
\end{proof}

\begin{thm}\label{theorem2}
{\rm Let assumptions (A1), (A3)-(A5) hold, then the $\theta$-EM solution $Y(t)$ converges to the exact solution $X(t)$ in $\mathcal{L}^p$ sense, i.e.,
\begin{equation*}
\mathbb{E}\left(\sup\limits_{0\le t\le T}|Y(t)-X(t)|^p\right)\le C\Delta^{\frac{1}{2}}
\end{equation*}
for $p\ge 2$.
}
\end{thm}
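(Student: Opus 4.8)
The plan is to mirror the proof of Theorem \ref{theorem1} as closely as possible, replacing every use of the BDG inequality for the Brownian integral by Lemma \ref{usefullemma} for the compensated Poisson integral, and to keep careful track of where the convergence order degrades from $\Delta^{p/2}$ to $\Delta^{1/2}$. First I would set $e(t):=Z(t)-D(Z(t-\tau))-X(t)+D(X(t-\tau))$ exactly as before, with $e(0)=-\theta b(\xi(0),\xi(-\tau))\Delta$, and write the It\^o formula for $|e(t)|^p$ for the jump process \eqref{jump}. The crucial difference is that the It\^o formula now produces, besides the drift term $p|e(s)|^{p-2}\langle e(s),b(\bar Y(s),\bar Y(s-\tau))-b(X(s),X(s-\tau))\rangle$, a jump-integral term of the form
\begin{equation*}
\int_0^t\int_U\Big[|e(s-)+\Gamma(s,u)|^p-|e(s-)|^p-p|e(s-)|^{p-2}\langle e(s-),\Gamma(s,u)\rangle\Big]\lambda(\d u)\d s
\end{equation*}
plus a local-martingale part, where $\Gamma(s,u):=h(\bar Y(s),\bar Y(s-\tau),u)-h(X(s),X(s-\tau),u)$. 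Using the elementary bound $|a+b|^p-|a|^p-p|a|^{p-2}\langle a,b\rangle\le C(|a|^{p-2}|b|^2+|b|^p)$ together with (A5), this term is controlled by $C\int_0^t|e(s)|^{p-2}(|Y(s)-X(s)|^2+|V_2(\cdot)|^2|Y(s-\tau)-X(s-\tau)|^2)\d s$ plus higher-order analogues, all of which are handled in the same way as $H_3+H_6$ in Theorem \ref{theorem1}.

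Next I would split the drift difference through the intermediate points $b(\bar Y(s),\bar Y(s-\tau))\to b(Y(s),\bar Y(s-\tau))\to b(Y(s),Y(s-\tau))\to b(X(s),X(s-\tau))$ exactly as in the $H_1,H_2,H_3$ decomposition, using (A1) and (A4), Lemma \ref{jumpytytk} for $\mathbb{E}\sup|Y(t)|^p\le C$, and crucially the estimate $\mathbb{E}\sup_{0\le t\le T}|Y(t)-\bar Y(t)|^p\le C\Delta$ from Lemma \ref{jumpytytk}. It is precisely this last bound that is only $O(\Delta)$ rather than $O(\Delta^{p/2})$ in the jump case, and after taking a square root in the Cauchy--Schwarz step (as in the $H_1$ estimate, where one writes $\int_0^t[\mathbb{E}|V_1(\bar Y,Y)|^{2p}]^{1/2}[\mathbb{E}|\bar Y-Y|^{2p}]^{1/2}\d s$), one picks up a factor $\Delta^{1/2}$; this is the source of the stated rate. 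The martingale term $H_7$-analogue is estimated via Lemma \ref{usefullemma} and (A5): $\mathbb{E}\sup_{0\le u\le t}|\int_0^u\int_U p|e(s-)|^{p-2}\langle e(s-),\Gamma(s,u)\rangle\tilde N(\d u,\d s)|$ is bounded, after Lemma \ref{usefullemma} and Young's inequality, by $\tfrac14\mathbb{E}\sup_{0\le u\le t}|e(u)|^p+C\int_0^t\mathbb{E}\sup_{0\le r\le s}|Y(r)-X(r)|^p\d s+C\Delta^{1/2}+C\Delta^p$, absorbing the $\tfrac14$ into the left-hand side.

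Assembling all pieces gives $\mathbb{E}\sup_{0\le u\le t}|e(u)|^p\le C\int_0^t\mathbb{E}\sup_{0\le r\le s}|Y(r)-X(r)|^p\d s+C\Delta^{1/2}$. Then, using $Y(t)-X(t)=e(t)+\theta b(Y(t),Y(t-\tau))\Delta+D(Y(t-\tau))-D(X(t-\tau))$ together with (A1), (A3), and the moment bound of Lemma \ref{jumpytytk}, I obtain
\begin{equation*}
\mathbb{E}\left(\sup\limits_{0\le u\le t}|Y(u)-X(u)|^p\right)\le C\int_0^t\mathbb{E}\left(\sup\limits_{0\le r\le s}|Y(r)-X(r)|^p\right)\d s+C\Delta^{\frac12}+C\mathbb{E}\left(\sup\limits_{0\le u\le (t-\tau)\vee0}|Y(u)-X(u)|^p\right),
\end{equation*}
and the Gronwall inequality followed by the usual step-by-step induction on the intervals $[0,\tau],[\tau,2\tau],\dots$ yields the claim. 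I expect the main obstacle to be the careful bookkeeping of the jump-integral remainder term from the It\^o formula --- verifying that every summand there reduces, via (A5), Lemma \ref{usefullemma}, the moment bounds, and the $O(\Delta)$ closeness of $Y$ and $\bar Y$, to a term of order at most $\Delta^{1/2}$ or a Gronwall-absorbable term --- since unlike the Brownian case there is no continuous quadratic variation and one must handle the full nonlinear increment $|e(s-)+\Gamma(s,u)|^p$ rather than just a second-order expansion.
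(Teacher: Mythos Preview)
Your proposal is correct and follows essentially the same route as the paper: the same definition of $e(t)$, the It\^o formula for jumps with the compensator remainder bounded via $|a+b|^p-|a|^p-p|a|^{p-2}\langle a,b\rangle\le C(|a|^{p-2}|b|^2+|b|^p)$, the same three-step splitting of the drift increment, the identification of the $\Delta^{1/2}$ rate as coming from the square root of the $O(\Delta)$ bound in Lemma \ref{jumpytytk} after Cauchy--Schwarz, and the final Gronwall plus delay-interval induction. One small slip: the local-martingale part produced by the It\^o formula is $\int_0^t\int_U\big[|e(s-)+\Gamma(s,u)|^p-|e(s-)|^p\big]\tilde N(\d u,\d s)$, not the linear term $p|e|^{p-2}\langle e,\Gamma\rangle$ you wrote; the paper estimates this full increment via Lemma \ref{usefullemma} and Young's inequality, which is what you should do as well, but the outcome is the same bound you stated.
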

\begin{proof}
Let $e(t)=Z(t)-D(Z(t-\tau))-X(t)+D(X(t-\tau))$, it is obvious that
\begin{equation*}
\begin{split}
e(t)=&e(0)+\int_0^t[b(\bar{Y}(s),\bar{Y}(s-\tau))-b(X(s),X(s-\tau))]\mbox{d}s\\
&+\int_0^t\int_U[h(\bar{Y}(s),\bar{Y}(s-\tau),u)-h(X(s),X(s-\tau),u)]\tilde{N}(\mbox{d}u,\mbox{d}s),
\end{split}
\end{equation*}
where $e(0)=-\theta b(\xi(0), \xi(-\tau))\Delta$. Define
\begin{equation*}
\begin{split}
\mu(t)=b(\bar{Y}(t),\bar{Y}(t-\tau))-b(X(t),X(t-\tau)),
\end{split}
\end{equation*}
and
\begin{equation*}
\begin{split}
\upsilon(t)=h(\bar{Y}(t),\bar{Y}(t-\tau),u)-h(X(t),X(t-\tau),u).
\end{split}
\end{equation*}
Application of the It\^{o} formula yields
\begin{equation*}
\begin{split}
|e(t)|^p=&|e(0)|^p+p\int_0^t |e(s)|^{p-2}\langle e(s),\mu(s)\rangle\mbox{d}s\\
&+\int_0^{t}\int_U[|e(s)+\upsilon(s)|^p-|e(s)|^p-p|e(s)|^{p-2}\langle e(s),\upsilon(s)\rangle]\lambda(\mbox{d}u)\mbox{d}s\\
&+\int_0^{t}\int_U[|e(s)+\upsilon(s)|^p-|e(s)|^p]\tilde{N}(\mbox{d}u,\mbox{d}s)\\
\le&|e(0)|^p+p\int_0^t |e(s)|^{p-2}\langle e(s),\mu(s)\rangle\mbox{d}s+C\int_0^t\int_U|e(s)|^{p-2}|\upsilon(s)|^2\lambda(\mbox{d}u)\mbox{d}s\\
&+C\int_0^t\int_U|\upsilon(s)|^p\lambda(\mbox{d}u)\mbox{d}s+\int_0^{t}\int_U[|e(s)+\upsilon(s)|^p-|e(s)|^p]\tilde{N}(\mbox{d}u,\mbox{d}s)\\
=:&|e(0)|^p+\bar{H}_1(t)+\bar{H}_2(t)+\bar{H}_3(t)+\bar{H}_4(t).
\end{split}
\end{equation*}
Similar to the derivation of Theorem \ref{theorem1}, with (A5) and  Lemmas \ref{jumppmoment}-\ref{jumpytytk}, we calculates
\begin{equation*}
\begin{split}
&\mathbb{E}\left(\sup\limits_{0\le u\le t}|\bar{H}_1(u)|\right)\\
\le& C\mathbb{E}\int_0^t|e(s)|^p\mbox{d}s+C\mathbb{E}\int_0^t|b(\bar{Y}(s),\bar{Y}(s-\tau))-b(Y(s),\bar{Y}(s-\tau))|^p\mbox{d}s\\
&+C\mathbb{E}\int_0^t|b(Y(s),\bar{Y}(s-\tau))-b(Y(s),Y(s-\tau))|^p\mbox{d}s\\
&+C\mathbb{E}\int_0^t|b(Y(s),Y(s-\tau))-b(X(s),X(s-\tau))|^p\mbox{d}s\\
\le& C\int_0^t\mathbb{E}\left(\sup\limits_{0\le u\le s}|Y(u)-X(u)|^p\right)\mbox{d}s+C\Delta^{p}\\
&+C\int_0^t[\mathbb{E}(1+|\bar{Y}(s)|^{l_1}+|Y(s)|^{l_1})^{2p}]^{\frac{1}{2}}[\mathbb{E}|\bar{Y}(s)-Y(s)|^{2p}]^{\frac{1}{2}}\mbox{d}s\\
&+C\int_0^t[\mathbb{E}(1+|\bar{Y}(s-\tau)|^{l_1}+|Y(s-\tau)|^{l_1})^{2p}]^{\frac{1}{2}}[\mathbb{E}|\bar{Y}(s-\tau)-Y(s-\tau)|^{2p}]^{\frac{1}{2}}\mbox{d}s\\
&+C\int_0^t[\mathbb{E}(1+|Y(s)|^{l_1}+|X(s)|^{l_1})^{2p}]^{\frac{1}{2}}[\mathbb{E}|Y(s)-X(s)|^{2p}]^{\frac{1}{2}}\mbox{d}s\\
&+C\int_0^t[\mathbb{E}(1+|Y(s-\tau)|^{l_1}+|X(s-\tau)|^{l_1})^{2p}]^{\frac{1}{2}}[\mathbb{E}|Y(s-\tau)-X(s-\tau)|^{2p}]^{\frac{1}{2}}\mbox{d}s\\
\le& C\int_0^t\mathbb{E}\left(\sup\limits_{0\le u\le s}|Y(u)-X(u)|^p\right)\mbox{d}s+C\Delta^{p}+C\Delta^{\frac{1}{2}}.
\end{split}
\end{equation*}
Similarly, we obtain
\begin{equation*}
\begin{split}
&\mathbb{E}\left(\sup\limits_{0\le u\le t}|\bar{H}_2(u)|\right)+\mathbb{E}\left(\sup\limits_{0\le u\le t}|\bar{H}_3(u)|\right)\\
\le&C\int_0^t\mathbb{E}\left(\sup\limits_{0\le u\le s}|Y(u)-X(u)|^p\right)\mbox{d}s+C\Delta^{p}+C\Delta^{\frac{1}{2}}.
\end{split}
\end{equation*}
Furthermore, by Lemmas \ref{usefullemma}-\ref{jumpytytk} and the H\"{o}lder inequality, we compute
\begin{equation*}
\begin{split}
\mathbb{E}\left(\sup\limits_{0\le u\le t}|\bar{H}_4(u)|\right)\le&\frac{1}{4}\mathbb{E}\left(\sup\limits_{0\le u\le t}|e(u)|^p\right)+C\mathbb{E}\left(\int_0^t\int_U|\upsilon(s)|^p \lambda(\mbox{d}u)\mbox{d}s\right)\\
\le&\frac{1}{4}\mathbb{E}\left(\sup\limits_{0\le u\le t}|e(u)|^p\right)+C\int_0^t\mathbb{E}\left(\sup\limits_{0\le u\le s}|Y(u)-X(u)|^p\right)\mbox{d}s+C\Delta^{p}+C\Delta^{\frac{1}{2}}.
\end{split}
\end{equation*}
Putting $\bar{H}_1(t)-\bar{H}_4(t)$ together, we arrive at
\begin{equation*}
\begin{split}
\mathbb{E}\left(\sup\limits_{0\le u\le t}|e(u)|^p\right)\le C\int_0^t\mathbb{E}\left(\sup\limits_{0\le u\le s}|Y(u)-X(u)|^p\right)\mbox{d}s+C\Delta^{\frac{1}{2}}.
\end{split}
\end{equation*}
Consequently, following the process of Theorem \ref{theorem1}, the desired result will be obtained.

\end{proof}

\begin{rem}
We see from Theorems \ref{theorem1} and \ref{theorem2} that the strong convergence rate of $\theta$-EM scheme for neutral SDDEs is $\frac{1}{2}$ for the Brownian motion case, while for the pure jumps case, the order is $\frac{1}{2p}$, that is to say, lower moment has a better convergence rate for neutral SDDEs with jumps, whence it is better to use the mean-square convergence for jump case.

\end{rem}

\begin{thm}
{\rm Let (A1), (A3)-(A5) hold, then the continuous form of $\theta$-EM scheme \eqref{jumpdiscrete} converges to the exact solution of \eqref{jump} almost surely with order $\alpha<\frac{1}{2p}$, i.e., there exists a finite random variable $\zeta_\alpha$ such that
\begin{equation*}
\sup\limits_{0\le t\le T}|Y(t)-X(t)|\le\zeta_\alpha\Delta^\alpha
\end{equation*}
for $\alpha\in(0,\frac{1}{2p})$.
}
\end{thm}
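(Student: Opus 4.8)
The plan is to follow the argument of Theorem \ref{astheorem} verbatim in structure, the only difference being that the $\mathcal{L}^p$-rate available here is $\Delta^{1/2}$ (from Theorem \ref{theorem2}) rather than the $\Delta^{p/2}$ of the Brownian case; this is exactly what drags the admissible almost-sure order down from $\alpha<\frac12$ to $\alpha<\frac1{2p}$. So fix $p\ge 2$ and $\alpha\in(0,\frac1{2p})$, and set $\beta:=\frac12-\alpha p>0$. Choose a strictly decreasing sequence of admissible step sizes $\Delta_k$, $k=1,2,\dots$, with $\Delta=\Delta_1\le\Delta^*$, $\Delta_k\downarrow 0$, and $\sum_k\Delta_k^{\beta}<\infty$ (a geometric sequence $\Delta_k=2^{-(k-1)}\Delta$ works at once since $\beta>0$).

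Next I would apply the Chebyshev inequality together with the estimate $\mathbb{E}\big(\sup_{0\le t\le T}|Y(t)-X(t)|^p\big)\le C\Delta^{1/2}$ of Theorem \ref{theorem2}, used along the sequence $(\Delta_k)$, to obtain
\begin{equation*}
\sum_k\mathbb{P}\Big(\sup_{0\le t\le T}|Y(t)-X(t)|>\Delta_k^\alpha\Big)\le\sum_k\Delta_k^{-\alpha p}\,\mathbb{E}\Big(\sup_{0\le t\le T}|Y(t)-X(t)|^p\Big)\le C\sum_k\Delta_k^{\frac12-\alpha p}<\infty.
\end{equation*}
The Borel--Cantelli lemma then guarantees that, almost surely, the event $\{\sup_{0\le t\le T}|Y(t)-X(t)|>\Delta_k^\alpha\}$ occurs for only finitely many $k$, so that there is an a.s.\ finite random variable $\zeta_\alpha$ (obtained by absorbing the finitely many exceptional indices into a constant, exactly as in Theorem \ref{astheorem}) with $\sup_{0\le t\le T}|Y(t)-X(t)|\le\zeta_\alpha\Delta^\alpha$.

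There is no substantive obstacle in this argument: it is a routine Chebyshev--Borel--Cantelli passage from the $\mathcal{L}^p$-rate to the pathwise rate. The only points deserving a sentence of care are (i) that the summability requirement is now $\sum_k\Delta_k^{1/2-\alpha p}<\infty$, which constrains $\alpha<\frac1{2p}$ precisely because Theorem \ref{theorem2} delivers $\Delta^{1/2}$ and not $\Delta^{p/2}$; and (ii) the bookkeeping that converts the bound along the discrete grid $(\Delta_k)$ into the stated estimate for the fixed step size $\Delta=\Delta_1$, which is handled exactly as in the Brownian case. If one wished to emphasise the contrast, one could also note (cf.\ the remark following Theorem \ref{theorem2}) that taking $p=2$ already gives the best rate $\alpha<\frac14$ from this method, so lower moments are again preferable in the jump setting.
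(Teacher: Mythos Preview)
Your proposal is correct and follows exactly the approach the paper takes: the paper's own proof is a single sentence referring back to Theorem \ref{astheorem}, and you have simply written out that Chebyshev--Borel--Cantelli argument in detail, correctly noting that the $\mathcal{L}^p$-rate $\Delta^{1/2}$ from Theorem \ref{theorem2} forces the summability condition $\sum_k\Delta_k^{1/2-\alpha p}<\infty$ and hence the constraint $\alpha<\frac{1}{2p}$.
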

\begin{proof}
The desired result can be obtained with Theorem \ref{theorem2} similar to the process of Theorem \ref{astheorem}.
\end{proof}

\end{document}